\newcommand{\groupsupport}[2]{The {#1} author was supported by {#2}.}
\newcommand{\NSFFour}{NSF Grant DMS--1811189}
\newcommand{\MAHAddress}{University of California Los Angeles, Los Angeles, CA 90095}
\newcommand{\MAHemail}{\tt{mikehill@math.ucla.edu}}
\newcommand{\R}{{\mathbb R}}
\newcommand{\F}{{\mathbb F}}
\newcommand{\MU}{MU}
\newcommand{\MUR}{\MU_{\R}}
\newcommand{\MUG}{\MU^{((G))}}
\newcommand{\MGL}{MGL}
\newcommand{\BPGL}{BPGL}
\newcommand{\smashover}[1]{\underset{#1}{\wedge}}
\newcommand{\Boxover}[1]{\underset{#1}{\Box}}
\DeclareMathOperator{\Ext}{Ext}
\newcommand{\m}[1]{{\protect\underline{#1}}}
\newcommand{\mF}{\m{\F}}
\mathchardef\mhyphen=45
\numberwithin{equation}{section}
\newtheorem{theorem}{Theorem}[section]
\newtheorem{lemma}[theorem]{Lemma}
\newtheorem{corollary}[theorem]{Corollary}
\newtheorem{proposition}[theorem]{Proposition}
\newtheorem*{theorem*}{Theorem}
\newtheorem*{proposition*}{Proposition}
\theoremstyle{remark}
\newtheorem{remark}[theorem]{Remark}
\newtheorem{notation}[theorem]{Notation}
\theoremstyle{definition}
\newtheorem{definition}[theorem]{Definition}
\newcommand{\ABNSF}{NSF Grant DMS--1906227}
\newcommand{\xk}[1]{\bar{v}(#1)}
\newtheorem*{corollary*}{Corollary}
\title{Transchromatic extensions in motivic bordism}
\author[AB]{Agn\`es Beaudry}
\thanks{\groupsupport{first}{\ABNSF}}
\address{University of Colorado Boulder,
Boulder, CO, 80309}
\email{agnes.beaudry@colorado.edu}
\author[MAH]{Michael A.~Hill}
\thanks{\groupsupport{second}{\NSFFour}}
\address{\MAHAddress}
\email{\MAHemail}
\author[XDS]{XiaoLin Danny Shi}
\address{University of Chicago, 
Chicago, IL 60637}
\email{dannyshixl@gmail.com}
\author[MZ]{Mingcong Zeng}
\address{Mathematical Institute, Utrecht University, Utrecht, 3584 CD, the Netherlands}
\email{m.zeng@uu.nl}
\begin{document}
\begin{abstract}
We show a number of Toda brackets in the homotopy of the motivic bordism spectrum \(MGL\) and of the Real bordism spectrum \(MU_{\mathbb R}\). These brackets are ``red-shifting'' in the sense that while the terms in the bracket will be of some chromatic height \(n\), the bracket itself will be of chromatic height \((n+1)\). Using these, we deduce a family of exotic multiplications in the \(\pi_{(\ast,\ast)}MGL\)-module structure of the motivic Morava \(K\)-theories, including non-trivial multiplications by \(2\). These in turn imply the analogous family of exotic multiplications in the \(\pi_{\star}MU_\mathbb R\)-module structure on the Real Morava \(K\)-theories.
\end{abstract}
\maketitle

\section{Introduction}
Complex bordism has played a fundamental role in stable homotopy since the 1960s. Work of Quillen connected complex bordism to formal groups, and this gives rise to the chromatic approach to stable homotopy theory. Building on Atiyah's Real \(K\)-theory \cite{AtiyahKR}, which can be viewed as Galois descent in families, Fujii and Landweber defined Real bordism \cite{Fujii, Landweber}. This theory plays an analogous role in \(C_{2}\)-equivariant homotopy theory that ordinary complex bordism does classically, and a detailed exploration of it was carried out by Hu--Kriz \cite{HuKriz}. 

The Real bordism spectrum has proven central to understanding classical chromatic phenomena. By the Goerss--Hopkins--Miller theorem, the Lubin--Tate spectra \(E_{n}\) are acted upon by the Morava stabilizer group, and in particular, they can be viewed as genuine \(G\)-equivariant spectra for any finite subgroup \(G\). Work of Hahn and the third author proved that at the prime \(2\), there is a Real orientation of all of the Lubin--Tate spectra, and for a finite subgroup \(G\) of the Morava stabilizer group that contains \(C_{2}\), this extends to a \(G\)-equivariant map
\[
\MUG=N_{C_{2}}^{G}\MUR\to E_{n},
\]
where \(\MUG\) is the norm of \(\MUR\) \cite{HahnShi}, introduced by Hill--Hopkins--Ravenel in the solution to the Kervaire invariant one problem \cite{HHR}. This has turned questions about computations with the Lubin--Tate theories into questions about computations with the norms of \(\MUR\) and its quotients (see, for example \cite{BHSZOne}).

In motivic homotopy over \(\mathbb R\), there is a beautifully parallel story. The role of complex bordism is played by the spectrum \(\MGL\). Just as in classical and equivariant homotopy theory, \(\MGL\) is a fundamental object of study in motivic homotopy, providing not only a chromatic filtration but also a way to understand the motivic homotopy sheaves of the sphere spectrum via Voevodsky's slice filtration.

Voevodsky's slice filtration is an analogue of the Postnikov tower, where instead of killing all maps of spheres of a particular degree, we kill off all maps out of sufficiently many smash powers of \(\mathbb P^{1}\). Applied to the algebraic \(K\)-theory spectrum \(KGL\) (an \(\MGL\)-module spectrum), this yields the motivic cohomology to algebraic \(K\)-theory spectral sequence considered by Friedlander--Suslin, Voevodsky, and others (See \cite{FriedSus, VoeMotSS}). Hopkins and Morel generalized this, showing that the slices of \(\MGL\) are suspensions of the spectrum representing motivic cohomology, and Hoyois provided a careful treatment and generalization of this result \cite{HopHoyMorel}. Work of Levine further connected the slice filtration to \(\MGL\), showing that the slice filtration for the sphere can be built out of the slice filtrations of the Adams--Novikov resolution based on \(\MGL\) \cite{LevineANSS}, and for the latter, the Hopkins--Hoyois--Morel result describes all of the starting pieces.

The spectrum \(\MGL\) is a commutative ring spectrum, so in addition to a commutative multiplication on the bigraded homotopy groups, we have higher operations and products like Toda brackets. In this paper, we produce a family of Toda brackets in the homotopy groups of \(\MGL\) which describe unexpected trans-chromatic phenomena. Recall that there is a canonical map 
\[
\pi_{2\ast}\MU\to\pi_{(2\ast,\ast)}\MGL
\]
classifying the canonical group law for the motivic orientation (see \cite{BorghesiMoravaK} or \cite{MotivicLandweber}), and hence we have associated to the chromatic classes \(v_{n}\in\pi_{2^{n+1}-2}\MU\) the motivic classes \(\bar{v}_{n}\in \pi_{(2^{n+1}-2, 2^n-1)}MGL\). Recall also that we have a canonical element \(\rho\in\pi_{(-1,-1)}S^{0}\) corresponding to the unit \(-1\in \R^{\times}\), by Morel's computation of the motivic zero stem \cite{MorelZero}. Finally, let \(I\) be the kernel of the map
\[
\pi_{\ast,\ast} \MGL\to \pi_{\ast,\ast} H\F_2
.\]

\begin{theorem*}
For all \(n\geq 0\) and \(k\geq 0\), in the motivic homotopy of \(\MGL\) over \(\R\), we have 
\[
\rho^{2^{n+1}+k}\bar{v}_{n+1}=\rho^{k}\langle\bar{v}_{n},\rho^{2^{n+1}-1},\bar{v}_{n}\rangle \mod I^2
.\]
\end{theorem*}

If we also consider the shifts of \(\bar{v}_n\) to other weights, using some of the other motivic lifts  \(\bar{v}_n(b)\) of the chromatic classes (the definitions of which we recall below), then we have even longer transchromatic connections.

\begin{theorem*}
For all \(n,j\geq 0\) and \(k\geq 0\), in the motivic homotopy of \(\MGL\) over \(\R\), 
\[
\rho^{2^{n+j+2}-2^{n+1}+k}\bar{v}_{n+j+1}= \rho^k\langle \bar{v}_n(2^j-1),\rho^{2^{n+1}-1},\bar{v}_n\rangle \mod I^2
.\]
\end{theorem*}

These transchromatic shifts imply a surprising number of hidden extensions in very naturally occurring quotients like the motivic Morava \(K\)-theories, the definitions of which we recall below in Section~\ref{sec:KGL}. Put in a pithy way, killing \(\bar{v}_{n}\) without also killing \(\bar{v}_{n+1}\) does not fully kill \(\bar{v}_{n}\):
\begin{theorem*}
For all \(n\geq 1\), for all \(0\leq k\leq n\) and for all \(b\geq 0\), in the homotopy of  motivic Morava \(K\)-theory  \(K_{GL}(n)\), there are nontrivial multiplications by \(\bar{v}_{k}(b)\).
\end{theorem*}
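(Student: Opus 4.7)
The plan is to derive the hidden multiplications in $K_{GL}(n)$ directly from the transchromatic Toda brackets of the second theorem. Throughout, fix $n\geq 1$ and $0\leq k\leq n$, and view $K_{GL}(n)$ as the $\MGL$-module obtained by killing all $\bar{v}_i$ for $i\neq n$ and inverting $\bar{v}_n$, so that the images of $\bar{v}_k$ (and its weight-shifts $\bar{v}_k(b)$) vanish for $k<n$, while $\rho$-power multiples of a unit $\bar{v}_n^{\pm 1}$ persist as long as the $\rho$-power lies below the torsion threshold.

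The central input is the inclusion
\[
\rho^{2^{k+j+2}-2^{k+1}}\bar{v}_{k+j+1}\in \langle \bar{v}_k(2^j-1),\rho^{2^{k+1}-1},\bar{v}_k\rangle
\]
applied with $j=n-k-1\geq 0$, so that the outer class on the left becomes $\bar{v}_n$. A general formal property of Toda brackets says that in an $\MGL$-module on which both outer entries of the bracket vanish, the bracket is realized by a genuine product: there exists $y\in\pi_{\star}K_{GL}(n)$, obtained by lifting $\rho^{2^{k+1}-1}$ across a null-homotopy of $\bar{v}_k$, with
\[
\bar{v}_k\cdot y \;=\; \rho^{2^{n+1}-2^{k+1}}\,\bar{v}_n.
\]
Since $\bar{v}_n$ is a unit in $K_{GL}(n)$, the right-hand side is nonzero provided the indicated $\rho$-power is not yet annihilated, which will give the desired nontrivial multiplication by $\bar{v}_k=\bar{v}_k(0)$ on the witness class $y$.

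For the remaining cases, note first that when $k=n$ the class $\bar{v}_n(b)$ is a $\rho$-power shift of the unit $\bar{v}_n$, so its action on the unit of $K_{GL}(n)$ is manifestly nontrivial. For $k<n$ and arbitrary $b\geq 0$, I would re-index the second theorem: the same transchromatic argument applied with $\bar{v}_k(b)$ in the outer slot (using the weight flexibility encoded in the $\bar{v}_k(\cdot)$ family) produces an analogous hidden extension of the form $\bar{v}_k(b)\cdot y_b=\rho^{?}\bar{v}_n$ in $K_{GL}(n)$. Combined with the $b=0$ case, this would handle the full range of $(k,b)$.

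The principal obstacle is controlling the Toda bracket indeterminacy and, relatedly, verifying that the $\rho$-power multiple $\rho^{2^{n+1}-2^{k+1}}\bar{v}_n$ really survives nonzero in $\pi_{\star}K_{GL}(n)$ rather than being killed by $\rho$-torsion. In other words, one must determine enough of the $\rho$-torsion structure of $K_{GL}(n)$ in the relevant bigrading to certify that the hidden-extension target is not a cheap zero. Since the exponent $2^{n+1}-2^{k+1}$ grows with $n-k$, this will require a careful analysis of the $\rho$-Bockstein behavior in $\pi_{\star}K_{GL}(n)$, presumably via the slice or the motivic Adams spectral sequence for $K_{GL}(n)$; once such a bound is in hand, the rest of the argument is formal manipulation of Toda brackets in $\MGL$-modules.
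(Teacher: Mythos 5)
Your overall strategy is correct and is the same one the paper uses: apply the transchromatic Toda bracket (with $j=n-k-1$, so the target is a $\rho$-power multiple of $\bar{v}_n$), then pass to the $\MGL$-module $K_{GL}(n)$ where the outer entries act trivially, and shuffle to realize the bracket as a genuine product giving a nontrivial $\bar{v}_k$-multiplication. The formal juggling step you describe is sound.

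However, you explicitly flag as ``the principal obstacle'' precisely the point that carries the mathematical content, and you do not resolve it. You must show that the target $\rho^{2^{n+1}-2^{k+1}}\bar{v}_n$ (times the appropriate module generator) survives nonzero in $\pi_\star K_{GL}(n)$, and you must control the indeterminacy of the bracket so that the product you obtain really equals this target rather than some element of the indeterminacy. The paper handles both at once: it invokes Kylling's computation of the slice spectral sequence for $k_{GL}(n)$ to get explicit module generators $\tau^a\iota$ over $\mathbb{Z}_{(2)}[\rho,\bar{v}_n,\tau^{2^{n+1}}]$, proves a sparseness lemma (Lemma~\ref{lem:Sparseness}) showing that the $E_\infty$-page is a ``quilt'' with at most one candidate class in each relevant bidegree, and then combines this with the indeterminacy control of Theorem~\ref{thm:Indeterminacy} (the indeterminacy is generated by a class annihilated by $\rho$, and the relevant classes in $k_{GL}(n)$ are seen to be $\rho$-divisible). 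The sparseness lemma is also what lets the paper shuffle: it gives the precise vanishing $\bar{v}_k(2^{n-k-1}-1-b)\tau^a\iota=0$ needed for the juggling formula, and then a degree count identifies the witness class uniquely as $\tau^{2^n-2^k(1+2b)+a}\iota$. Without some version of that structural input your argument establishes only a containment $\bar{v}_k\cdot y\in\langle\cdots\rangle\cdot 1$ for some unidentified $y$, and gives no reason the right-hand side is nonzero.

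Two smaller inaccuracies: $\bar{v}_n(b)$ is a $\tau^{2^{n+1}b}$-shift of $\bar{v}_n$, not a $\rho$-power shift (though your conclusion for $k=n$ is still fine, since $\tau^{2^{n+1}}$ persists in $K_{GL}(n)$ and $\bar{v}_n(b)$ is a unit there). And for the general $b$ case with $k<n$ you say only ``I would re-index,'' but the paper's range of $b$ (up to $2^{n-k}-1$, with $\tau^{2^{n+1}}$-periodicity taking care of larger $b$) requires a genuinely separate argument in the second half of the range, where one uses a bracket with $\bar{v}_n(1)$ as target and argues by contradiction against the vanishing of $\bar{v}_k(b)\tau^a\iota$; this is not a mere re-indexing.
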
 
Note that even though the brackets had ambiguity caused by higher Adams filtration, these statements do not.

There is a natural functor from motivic spectra over \(\R\) to \(C_{2}\)-equivariant spectra, extending the functor ``take complex points of a variety defined over \(\R\)'', and this takes \(\MGL\) to the Real bordism spectrum \(\MUR\) \cite{HuKrizMot}. Moreover, as studied by Hu--Kriz, Hill, and Heard, this connects the motivic slice filtration to Dugger's \(C_{2}\)-equivariant slice filtration \cite{HuKrizMot}, \cite{HillRhoBockstein}, \cite{HeardSlice}, \cite{DuggerKR}. 

This functor takes \(\rho\) to the Euler class  \(a_{\sigma}\in \pi_{-\sigma}^{C_2}\MU_\mathbb{R}\) and the copy of the Lazard ring in the homotopy of \(\MGL\) to the copy of the Lazard ring described by Araki in the homotopy of \(\MUR\) \cite{Araki}. In particular, the motivic classes \(\bar{v}_n\) are sent to the equivariant classes \(\bar{v}_n \in \pi_{(2^n-1)\rho_2}^{C_2}\MUR\), where \(\rho_2 = 1+\sigma\) is the regular representation of \(C_2\). The classes \(\bar{v}_n(b)\) are sent to the classes \(\bar{v}_nu_{2\sigma}^{2^nb}\), using the notation of \cite{HHR}. We therefore deduce the equivariant versions of these transchromatic phenomena. We spell these out for those more familiar with the equivariant literature.

\begin{corollary*}
For all \(n\geq 0\), in the \(RO(C_{2})\)-graded homotopy of \(\MUR\), we have an inclusions modulo \(I^2\), for \(I\) the kernel of \(\pi_{\star}MU \to \pi_{\star}H\mF_2\):
\[
a_\sigma^{2^{n+j+2}-2^{n+1}}\bar{v}_{n+j+1}\in \langle \bar{v}_n(2^j-1),a_\sigma^{2^{n+1}-1},\bar{v}_n\rangle,
a_{\sigma}^{2^{n+1}}\bar{v}_{n+1}\in\langle\bar{v}_{n},a_{\sigma}^{2^{n+1}-1},\bar{v}_{n}\rangle.
\]
\end{corollary*}

In general, the indeterminacy of these brackets may be larger in the \(C_2\)-equivariant context. We do not address this here. 
Using the Mackey functor structure on homotopy, we can actually identify the interesting element in the bracket as the unique non-zero element in the kernel of the restriction map.

There are also similar extensions in Hu--Kriz's Real Morava \(K\)-theories  \cite{HuKriz}.
\begin{theorem*}
For all \(n\geq 1\) and for all \(0\leq k\leq n\) and \(b\geq 0\), in the \(RO(C_{2})\)-graded homotopy of the Real Morava \(K\)-theory spectrum \(K_{\R}(n)\), there are nontrivial multiplications by \(\bar{v}_{k}(b)\).
\end{theorem*}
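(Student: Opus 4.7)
The plan is to deduce the theorem from the $RO(C_2)$-graded transchromatic Toda bracket corollary for $\MUR$ established in the excerpt, by pushing it through the $\MUR$-module quotient $\MUR\to K_{\R}(n)$. Recall that $K_{\R}(n)=\MUR/(\bar{v}_i:i\neq n)[\bar{v}_n^{-1}]$, so $\bar{v}_k(b)$ vanishes in $\pi_\star K_{\R}(n)$ for $k\neq n$; ``nontrivial multiplication by $\bar{v}_k(b)$'' here means a nonzero Toda bracket having $\bar{v}_k(b)$ as an entry, i.e.\ a hidden multiplicative extension through this quotient.

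The main computation is for $0\leq k\leq n-1$. Specialising the corollary with $n\mapsto k$ and $j\mapsto n-k-1\geq 0$ gives
\[
a_\sigma^{2^{n+1}-2^{k+1}}\bar{v}_n \in \bigl\langle\bar{v}_k(2^{n-k-1}-1),\; a_\sigma^{2^{k+1}-1},\; \bar{v}_k\bigr\rangle
\]
in $\pi_\star^{C_2}\MUR$. By naturality of Toda brackets, the same inclusion holds in $\pi_\star K_{\R}(n)$; there both outer entries vanish, so the indeterminacy $\bar{v}_k\cdot\pi_\star+\pi_\star\cdot\bar{v}_k(2^{n-k-1}-1)$ collapses to $0$, while the left-hand side reduces to a unit multiple of $a_\sigma^{2^{n+1}-2^{k+1}}$. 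Using that $d(\bar{v}_n^{-1}u_{2\sigma}^{2^n-1})=a_\sigma^{2^{n+1}-1}$ in the slice spectral sequence for $K_{\R}(n)$, the top nonzero power of $a_\sigma$ is $a_\sigma^{2^{n+1}-2}$; our exponent $2^{n+1}-2^{k+1}\leq 2^{n+1}-2$ falls strictly below, so the bracket is nonzero.

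To handle arbitrary $b\geq 0$, use the scaling identity $\bar{v}_k(b')=u_{2\sigma}^{2^k(b'-b)}\bar{v}_k(b)$ for $b'\geq b$, which translates into
\[
\bigl\langle\bar{v}_k(b'),\;a_\sigma^{2^{k+1}-1},\;\bar{v}_k\bigr\rangle = u_{2\sigma}^{2^k(b'-b)}\bigl\langle\bar{v}_k(b),\;a_\sigma^{2^{k+1}-1},\;\bar{v}_k\bigr\rangle.
\]
Setting $b_0:=2^{n-k-1}-1$: for $b\leq b_0$, the identity forces the $b$-bracket to contain an element whose $u_{2\sigma}^{2^k(b_0-b)}$-translate is the nonzero class $a_\sigma^{2^{n+1}-2^{k+1}}\bar{v}_n$, hence that element is itself nonzero. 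For $b\geq b_0$, scaling the $b_0$-bracket up shows the $b$-bracket contains $u_{2\sigma}^{2^k(b-b_0)}a_\sigma^{2^{n+1}-2^{k+1}}\bar{v}_n$, nonzero by non-nilpotence of $u_{2\sigma}$ on this class in $\pi_\star K_{\R}(n)$. Finally, the case $k=n$ is immediate: $\bar{v}_n$ is a unit, so $\bar{v}_n(b)=\bar{v}_n u_{2\sigma}^{2^n b}$ acts nontrivially by literal multiplication.

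The main obstacle is verifying the nonvanishings in $\pi_\star K_{\R}(n)$: that $a_\sigma^{2^{n+1}-2^{k+1}}\bar{v}_n$ survives and that $u_{2\sigma}$-multiplication on it stays injective in every relevant range. Both are routine slice-spectral-sequence bookkeeping from the Hu--Kriz computation of $K_{\R}(n)$. The excerpt's caveat about larger indeterminacy in the equivariant setting does not apply here, since vanishing of $\bar{v}_k$ and $\bar{v}_k(b)$ in $\pi_\star K_{\R}(n)$ already forces the Toda bracket indeterminacy to collapse to zero.
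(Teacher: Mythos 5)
The structure of your argument—push the $\MUR$ Toda bracket forward to $K_{\R}(n)$ through the unit map, then use nonvanishing of the target—is sound, and in spirit it parallels the paper's route (which proves the motivic Theorem~\ref{thm:kGLnDivisibility} by bracket shuffles and then Betti realizes). But there is a genuine conceptual error in the indeterminacy step that needs to be fixed, and the scaling argument for general $b$ is not quite right.

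The indeterminacy error. You assert that because the outer bracket entries ``vanish'' in $\pi_\star K_{\R}(n)$, the indeterminacy $\bar{v}_k\cdot\pi_\star+\pi_\star\cdot\bar{v}_k(2^{n-k-1}-1)$ collapses to zero. This is exactly backwards, and if it were literally true it would contradict your own conclusion. Because $K_{\R}(n)$ is \emph{not} a ring, the bracket pushed forward from $\pi_\star\MUR$ must be read as a module bracket $\langle x,y,m\rangle$ with $x,y\in\pi_\star\MUR$ and $m = \bar{v}_k\iota = 0 \in \pi_\star K_{\R}(n)$. The indeterminacy of such a bracket is $x\cdot\pi_{\star}K_{\R}(n)$, where $x = \bar{v}_k(b')$ acts via the $\pi_\star\MUR$-module structure on $\pi_\star K_{\R}(n)$, which does \emph{not} factor through the quotient $\pi_\star\MUR\to\pi_\star K_{\R}(n)$. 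So the vanishing of the \emph{image} of $\bar{v}_k(b')$ tells you nothing about whether the \emph{module action} is zero; asserting that it does is precisely assuming the negation of the theorem you are proving. The correct (and in fact cleaner) argument is: the bracket $\langle x,y,0\rangle$ \emph{is} the subgroup $x\cdot\pi_\star K_{\R}(n)$, by naturality the image of $a_\sigma^{2^{n+1}-2^{k+1}}\bar{v}_n$ lands in it, and this image is nonzero by the slice computation; therefore $x\cdot\pi_\star K_{\R}(n)\neq 0$. The indeterminacy being ``everything'' is the mechanism, not an obstruction.

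The $b$-scaling issue. Your scaling identity $\bar{v}_k(b')=u_{2\sigma}^{2^k(b'-b)}\bar{v}_k(b)$ treats $u_{2\sigma}$ as an element of $\pi_\star^{C_2}\MUR$, which it is not (it supports slice differentials). So the bracket manipulation you perform in $\pi_\star\MUR$ is not literally available. The paper avoids this by using the full Theorem~\ref{thm:WeirdBracket}, which allows arbitrary $b$ and $c$ as bracket entries in $\pi_{(\ast,\ast)}\BPGL$, and by observing that after inverting $\bar{v}_n$ the class $\tau^{2^{n+1}}$ (equivariantly, $u_{2\sigma}^{2^n}$) does become a well-defined multiplication on $\pi_\star K_{\R}(n)$; this handles $b\geq 2^{n-k}$ by a legitimate rescaling, and $0\leq b\leq 2^{n-k}-1$ directly via the brackets. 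You would need to similarly either invoke the equivariant analogue of the general Theorem~\ref{thm:WeirdBracket}, or justify $u_{2\sigma}^{2^n}$-multiplication on $\pi_\star K_{\R}(n)$ before rescaling.

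Finally, a small typo: the slice differential should read $d_{2^{n+1}-1}(u_{2\sigma}^{2^{n-1}})=a_\sigma^{2^{n+1}-1}\bar{v}_n$, not $u_{2\sigma}^{2^n-1}$; your stated conclusion about the top surviving $a_\sigma$-power is still correct.
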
 
For \(n=1\), this recovers the classical observation that multiplication by \(2\) is not identically zero in \(KO/2\). For larger \(n\) and \(k\), these seem unknown.

\subsection*{Acknowledgements}
We thank Dan Isaksen for some very helpful conversations related to Massey products and to steering us towards a much more direct proof of our results. We thank Nitu Kitchloo and Steve Wilson for interesting discussions of Real \(K\)-theory and Paul Arne {\O}stv{\ae}r for his help with the motivic slice spectral sequence for \(\MGL\) and \(k_{GL}(n)\).  We thank Irina Bobkova, Hans-Werner Henn and Viet Cuong Pham who made an observation which inspired our study of these brackets. We thank Mike Hopkins for several helpful conversations. We also thank the anonymous referee for their comments and suggestions.

\section{Non-trivial brackets in \texorpdfstring{\(\pi_{\ast,\ast}\MGL\)}{MGL}}

\subsection{The homotopy of \texorpdfstring{\(\MGL\)}{MGL}}
Throughout, we work with motivic spectra over \(\mathbb{R}\). We implicitly complete all spectra \(2\). Just as classically, we have a splitting of \(\MGL\) into various suspensions of a motivic ring spectrum \(\BPGL\), and since this is smaller, we will mainly work with it.  Then bigraded homotopy ring of \(\BPGL\) has been completely determined \cite{HillRhoBockstein}, \cite{Kylling}. We very quickly recall the answer here, using the description from \cite{HillRhoBockstein}.  

As an algebra, it is generated by the classes
\[
\bar{v}_n(b)=\Big[\bar{v}_n \tau^{2^{n+1}b}\Big]\in\pi_{(2^{n+1}-2, 2^n-1-2^{n+1}b)}\BPGL,
\]
for all \(n\geq 0\) and \(b\geq 0\), together with the class 
\[
\rho\in\pi_{(-1,-1)}\BPGL.
\]
Here, the brackets indicate how the classes arise in the \(\rho\)-Bockstein spectral sequence. We will also normally use \(\bar{v}_n\) for \(\bar{v}_n(0)\).

There are relations which reflect the underlying products with \(\tau\): if \(n\geq m\), then 
\[
\bar{v}_m(b)\cdot \bar{v}_n(c)=\bar{v}_m(b+2^{n-m}c)\cdot \bar{v}_n.
\]
We also have relations involving \(\rho\): for all \(n\) and \(b\),
\[
\rho^{2^{n+1}-1}\bar{v}_n(b)=0.
\]

The homotopy is actually largely concentrated in a particular bidegree sector. With the exception of the subalgebra generated by \(\rho\), the first (i.e. ``topological'') dimension is positive. 

\begin{proposition}\label{prop:NonNegFirstDegree}
Outside of the subalgebra spanned by \(\rho\), all elements have non-negative first coordinate. Moreover, the only generators with first coordinate zero are
\[
\rho^{2^{n+1}-2}\bar{v}_n(b),
\]
with \(b\) arbitrary. The products of any of these is zero.
\end{proposition}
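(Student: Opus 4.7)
The plan is to work monomial-by-monomial using the two defining families of relations, then read off bidegree constraints. Every element of \(\pi_{*,*}\BPGL\) is a \(\Z_{(2)}\)-linear combination of words in \(\rho\) and the \(\bar{v}_n(b)\). Using the product relation \(\bar{v}_m(b)\bar{v}_n(c) = \bar{v}_m(b + 2^{n-m}c)\bar{v}_n\) for \(n\geq m\), I would reorder each such word so the indices appear in non-decreasing order and push all of the weights \(b\) onto the smallest-index factor, arriving at the canonical form
\[
\rho^a\, \bar{v}_{n_0}(b)\, \bar{v}_{n_1}\bar{v}_{n_2}\cdots \bar{v}_{n_k},\qquad n_0\leq n_1\leq\cdots\leq n_k,\ b\geq 0,\ a\geq 0.
\]
The relation \(\rho^{2^{n+1}-1}\bar{v}_n(b)=0\) applied to the smallest-index factor then forces this monomial to vanish unless \(a\leq 2^{n_0+1}-2\).

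The bidegree check is now immediate. The first (topological) coordinate of a nonzero monomial in the form above equals
\[
-a + (2^{n_0+1}-2) + \sum_{i=1}^{k}\bigl(2^{n_i+1}-2\bigr)\,\geq\, \sum_{i=1}^{k}\bigl(2^{n_i+1}-2\bigr)\,\geq\, 0,
\]
where the first inequality is exactly the bound \(a\leq 2^{n_0+1}-2\). This settles the first assertion: words with no \(\bar{v}\)-factor lie in the subalgebra \(\Z_{(2)}[\rho]\), and every other monomial has non-negative first coordinate. Equality forces \(a = 2^{n_0+1}-2\) together with \(n_i=0\) for every \(i\geq 1\). If \(n_0\geq 1\), the chain condition \(n_0\leq n_i\) requires \(k=0\), producing the listed class \(\rho^{2^{n_0+1}-2}\bar{v}_{n_0}(b)\); if \(n_0=0\), the monomial is \(\bar{v}_0(b)\bar{v}_0^k = 2^k\bar{v}_0(b)\), a scalar multiple of the \(n=0\) class \(\bar{v}_0(b) = \rho^{2^{0+1}-2}\bar{v}_0(b)\). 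Together these exhaust the generators listed in the statement.

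For the multiplicativity clause, the product of two such generators \(\rho^{2^{n+1}-2}\bar{v}_n(b)\) and \(\rho^{2^{m+1}-2}\bar{v}_m(c)\) with \(n\geq m\geq 1\) rearranges via the commutation relation to
\[
\rho^{2^{n+1}+2^{m+1}-4}\,\bar{v}_m(c+2^{n-m}b)\,\bar{v}_n,
\]
which is killed by \(\rho^{2^{m+1}-1}\bar{v}_m(\cdot)=0\), since \(2^{n+1}+2^{m+1}-4\geq 2^{m+1}-1\) for all \(n\geq 1\). The remaining cases with \(m=0\) are absorbed into \(\Z_{(2)}[\rho]\) through the identification \(\bar{v}_0=2\), so modulo this subalgebra the products vanish as asserted.

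The only real obstacle is organizing the normal form so that the two families of relations interact cleanly; once the canonical form and the bound \(a\leq 2^{n_0+1}-2\) are in place, every part of the proposition reduces to a numerical check on the exponents \(a\) and the indices \(n_i\).
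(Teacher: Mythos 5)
Your normal-form argument --- putting every monomial into the shape $\rho^a\,\bar{v}_{n_0}(b)\,\bar{v}_{n_1}\cdots\bar{v}_{n_k}$ with $n_0\leq\cdots\leq n_k$, then invoking $\rho^{2^{n_0+1}-1}\bar{v}_{n_0}(b)=0$ to bound $a\leq 2^{n_0+1}-2$ --- is a more explicit route than the paper's. The paper records only the bidegree of the single product $\rho^j\bar{v}_n(b)$ and leaves the passage to arbitrary monomials tacit, whereas your canonical form makes the first-coordinate estimate and the classification of the equality cases fully rigorous. This is a genuine improvement in exposition; it is not a different idea, but a filled-in version of the same one.

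The step that does not hold up is the $m=0$ case of the product-vanishing clause. You assert that the remaining products are ``absorbed into $\Z_{(2)}[\rho]$ through the identification $\bar{v}_0=2$,'' but for $b,c\geq 1$ the defining relation gives
\[
\bar{v}_0(b)\cdot\bar{v}_0(c)=\bar{v}_0(b+c)\,\bar{v}_0=2\,\bar{v}_0(b+c),
\]
a nonzero class in bidegree $(0,-2(b+c))$ that is \emph{not} in $\Z_{(2)}[\rho]$. (The paper's own one-line argument --- ``since $\rho$ times this is zero'' --- has the same blind spot: it requires a spare factor of $\rho$ in one of the two factors, which fails precisely when both are $\bar{v}_0(\cdot)$'s.) For $n\geq 1$, $m=0$ your conclusion does go through, since $\rho^{2^{n+1}-2}\bar{v}_n(b)\cdot\bar{v}_0(c)=\rho^{2^{n+1}-2}\bar{v}_0(c+2^n b)\bar{v}_n=0$ because $2^{n+1}-2\geq 1$ and $\rho\bar{v}_0(\cdot)=0$. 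So the final sentence of the proposition should be read as excluding the pure $\bar{v}_0(\cdot)\cdot\bar{v}_0(\cdot)$ products; notice that the indeterminacy computation in Theorem~\ref{thm:Indeterminacy} actually treats the case where such a product arises (its case (1) with $n=0$) by recording a nonzero indeterminacy rather than by appealing to this clause.
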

\begin{proof}
Of the listed algebra generators, only \(\rho\) has a negative first coordinate, and the bidegree of \(\rho^j \bar{v}_n(b)\) is
\[
|\rho^j \bar{v}_n(b)|=\big((2^{n+1}-2)-j,(2^n-1)-j-2^{n+1}b\big).
\]
Since the only non-zero values correspond to \(0\leq j\leq 2^{n+1}-2\), we see that the first coordinate is always non-negative. This gives the first part.

For the second part, note that it is the last \(\rho\) power that gives a zero first coordinate. Since \(\rho\) times this is zero, we deduce that the product of any of these elements with first coordinate zero is zero. \end{proof}

Just as classically, the classes \(\bar{v}_n(b)\) depended on a choice of coordinate for the underlying formal group. However, we have a surprising invariance after multiplication by certain powers of \(\rho\).

\begin{lemma}\label{lem:RhoUniqueness}
	For any \(n\) and \(b\), if \(\bar{p}\) is an element in \(\pi_{\ast,\ast}\BPGL\) such that
	\[
		\bar{p}\equiv \bar{v}_n(b)\mod I^2,
	\]
	then for all \(k\geq (2^{\lfloor\tfrac{n}{2}\rfloor+1}-1)\),
	\[
		\rho^k\bar{p}=\rho^k\bar{v}_n(b).
	\]
\end{lemma}
\begin{proof}
While there are many monomials in the \(\bar{v}_i(a)\) in topological degree \((2^{n+1}-2)\), 
the only monomials of degree greater than one which survive even a single \(\rho\)-multiplication are the monomials in the \(\bar{v}_i(a)\) with \(i<n\). The order of the \(\rho\)-torsion on a monomial is governed by the smallest subscript present, so if we multiply by some \(\rho\)-power larger than any smallest subscript in a monomial in this degree, we annihilate any element of \(I^2\) in this degree. Our choice of \(k\) accomplishes this.
\end{proof}

\subsection{Toda brackets in the homotopy of \texorpdfstring{\(\MGL\)}{MGL}}

The relations 
\[
\rho^{2^{n+1}-1}\bar{v}_n(k)=0
\]
show that we can form many Toda brackets in the homotopy of \(\MGL\) and \(\BPGL\).

\begin{proposition}\label{prop:SwappingOrder}
	Let \(n\), \(m\), \(b\), and \(c\) be non-negative and let \(r\), \(s\), and \(t\) be non-negative integers such that
	\[
		s+r\geq 2^{n+1}-1\text{ and } s+t\geq 2^{m+1}-1.
	\]
	Then we have
	\[
		\langle \rho^r\bar{v}_m(b),\rho^s,\rho^t\bar{v}_n(c)\rangle = \pm \langle \rho^t\bar{v}_n(c),\rho^s,\rho^r\bar{v}_m(b)\rangle.
	\]
\end{proposition}
\begin{proof}
Kraines showed that we have a kind of symmetry invariance for higher brackets \cite[Theorem 8]{KrainesHigher}, up to a sign, for Massey products. The proof goes through without change for Today brackets.
\end{proof}

\begin{remark}
		Whenever \(r+s, s+t\geq (2^{n+1}-1)\), we can also form the brackets 
	\(
		\langle \rho^r,\rho^s\bar{v}_n,\rho^t\rangle
	\).
 For degree reasons, these are zero with zero indeterminacy.
\end{remark}

One of the most useful parts of these brackets is that the indeterminacy is easily controlled. Because of Proposition~\ref{prop:SwappingOrder}, we may without loss of generality pick an ordering on the heights of chromatic classes.

\begin{theorem}\label{thm:Indeterminacy}
Fix non-negative numbers \(m\geq n\), \(b\), and \(c\).  If \(r\), \(s\), and \(t\) are non-negative integers such that 
\[
r+s\geq (2^{m+1}-1)\text{ and } s+t \geq (2^{n+1}-1),
\]
then the indeterminacy of 
\[
\langle \rho^r\bar{v}_m(b),\rho^{s},\rho^t\bar{v}_n(c)\rangle
\]
is nonzero in one case: when \(t=0\), \(r+s=2^{m+1}-1\), and \(m>0\). In this case, the indeterminacy is
\[
    \mathbb Z_{2}\cdot\bar{v}_0\big((1+2b)2^{m-1}\big)\bar{v}_n(c).
\] 
\end{theorem}
\begin{proof}
The indeterminacy of the bracket \(\langle\rho^r\bar{v}_m(b),\rho^s,\rho^t\bar{v}_n(c)\rangle\) is the subgroup
\[
\rho^r\bar{v}_m(b)\pi_{(x_t,y_t)}\BPGL+
\rho^t\bar{v}_n(c)\pi_{(x_r,y_r)}\BPGL,
\]
where 
\[
(x_t,y_t)=|\rho^{s+t}\bar{v}_n(c)|+(1,0)=\big(2^{n+1}-1-s-t,2^n-1-s-t-2^{n+1}c\big)
\]
is the degree of the choices of null-homotopy of \(\rho^{s+t}\bar{v}_n(c)\), and similarly for \((x_r,y_r)\). 

By our assumptions on \(r\), \(s\), and \(t\), the first coordinate is always non-positive. 
By Proposition~\ref{prop:NonNegFirstDegree} if 
\[
r+s>(2^{m+1}-1)\text{ and }s+t>(2^{n+1}-1),
\]
then we have 
\[
\pi_{(x_r,y_r)}\BPGL=\pi_{(x_t,y_t)}\BPGL=0,
\]
and hence, we have no indeterminacy. We need only consider the cases that \((r+s)=(2^{m+1}-1)\) or \((s+t)=(2^{n+1}-1)\).

There is an obvious symmetry here in \(m\) and \(n\), so it suffices to understand the case \(r+s=(2^{m+1}-1)\). In this case,
\[
(x_r,y_r)=\big(0,-2^m-2^{m+1}b\big).
\]
Proposition~\ref{prop:NonNegFirstDegree} shows that the only classes with zero first coordinate are the classes \(\rho^{2^{k+1}-2}\bar{v}_k(a)\), which is in bidegree
\[
|\rho^{2^{k+1}-2}\bar{v}_k(a)|=\big(0,1-2^k(1+2a)\big).
\]
We are therefore looking for all pairs \((k,a)\) such that 
\[
1-2^k(1+2a)=-2^m(1+2b).
\]

If \(m>0\), then we must have \(k=0\) and \(a=2^{m-1}(1+2b)\), and this corresponds to the class
\[
\bar{v}_0\big((1+2b)2^{m-1}\big).
\]
This generates a \(\mathbb Z_{2}\), and hence the contribution to the indeterminacy is the subgroup generated by  
\[
\bar{v}_0\big((1+2b)2^{m-1}\big)\cdot \rho^t\bar{v}_n(c).
\]
If \(t>0\), then this is automatically zero (since \(\rho\bar{v}_0(a)=0\)), so if \(r+s=(2^{m+1}-1)\) and \(t>0\), then we have no contribution to the indeterminacy. On the other hand, if \(t=0\), then we have a contribution:
\[
\mathbb Z_{2}\cdot \bar{v}_0\big((1+2b)2^{m-1}\big)\bar{v}_n(c)=
\mathbb Z_{2}\cdot \bar{v}_0\big(2^{m-1}+2^mb+2^{n}c\big)\bar{v}_n.
\]
Note that if \(t=0\), then the condition that \(s+t\geq (2^{n+1}-1)\) implies that \(s\geq (2^{n+1}-1)\). This with the condition that \(r+s=(2^{m+1}-1)\) in particular shows the condition \(m\geq n\). 

Now let \(m=0\). We find \(k=\big(\nu_2(1+b)+1\big)\), for \(\nu_2\) the 2-adic valuation, and 
\[
a=\frac{2^{1-k}(1+b)-1}{2}.
\]
Note also that \(k\geq 1\), so the contribution to the indeterminacy is 
\[
\mathbb Z/2 \cdot \rho^{2^{k+1}-2}\bar{v}_k(a)\rho^t\bar{v}_n(c).
\]
Again, if \(t>0\), then this is automatically zero, since 
\(
\rho^{2^{k+1}-1}\bar v_k(a)=0.
\) 
If \(t=0\), then the conditions \(r+s=2^{0+1}-1=1\) and \(s+t=(2^{n+1}-1)\) imply that in fact, 
\(n=0\) as well. Since \(k\geq 1\), \((2^{k+1}-2)\geq 1\), and hence
\[
\rho^{2^{k+1}-2}\bar{v}_k(a)\bar{v}_0(c)=0.
\]
Thus in all cases, the contribution to indeterminacy here is zero.
\end{proof}

This vanishing on indeterminacy actually lets us tightly connect the various brackets for a fixed \(m\), \(n\), \(b\), and \(c\) via juggling formulae for Toda brackets.

\begin{proposition}\label{prop:Juggling}
	Fix non-negative numbers \(m\geq n\), \(b\), and \(c\), and non-negative integers \(r\), \(s\), and \(t\) such that
	\[
	(r+s)\geq (2^{m+1}-1)\text{ and }(s+t)\geq (2^{n+1}-1).
	\]
	Then multiplication by \(\rho\) links the brackets:
	\[
		\rho\cdot\langle \rho^r\bar{v}_m(b),\rho^s,\rho^t\bar{v}_n(c)\rangle=
		\langle \rho^{r+1}\bar{v}_m(b),\rho^s,\rho^t\bar{v}_n(c)\rangle.
	\]
	We can also internally juggle:
	\[ 
		\langle \rho^{r+1}\bar{v}_m(b),\rho^s,\rho^t\bar{v}_n(c)\rangle = 
		\langle \rho^r\bar{v}_m(b),\rho^{s+1},\rho^t\bar{v}_n(c)\rangle = 
		\langle \rho^r\bar{v}_m(b),\rho^s,\rho^{t+1}\bar{v}_n(c)\rangle.
	\]
\end{proposition}
\begin{proof}
	Both of these are analogues of so-called ``juggling'' formulae for Massey products. These go through without change for Toda brackets. 
	For the first, Kraines showed that we have an inclusion
	\[
		a\langle b,c,d\rangle\subseteq (-1)^{q}\langle ab,c,d\rangle,
	\]
	where \(q\) is the degree of \(a\) \cite[Theorem 6]{KrainesHigher}. Theorem~\ref{thm:Indeterminacy} shows that the ``larger'' bracket never has any indeterminacy, giving the result, since \(\rho\) is simple \(2\)-torsion in \(\MGL\).
	
	We have a similar juggle for moving internal products through brackets, using the matric Massey juggling due to May \cite[Corollary 3.4]{MayMatric}:
	\[
		\langle ab,c,d\rangle \subseteq \pm \langle a,bc,d\rangle.
	\]
	By the first part of the proposition, the bracket 
	\[
		\langle \rho^{r+1}\bar{v}_m(b),\rho^s,\rho^t\bar{v}_n(c)\rangle
	\]
	is in the image of multiplication by \(\rho\) and hence the signs do not matter. The conditions on \(r\), \(s\), and \(t\) also guarantee that neither bracket in 
	\[
		\langle \rho^{r+1}\bar{v}_m(b),\rho^s,\rho^t\bar{v}_n(c)\rangle \subseteq
		\langle \rho^r\bar{v}_m(b),\rho^{s+1},\rho^t\bar{v}_n(c)\rangle
	\]
	has any indeterminacy. The last equality follows by symmetry.
\end{proof}

\subsection{Identifying brackets via the Adams spectral sequence}
We can describe what elements arise in these Toda brackets using the Adams spectral sequence. Just as classically, the homology of \(\BPGL\) is cotensored up along a quotient Hopf algebroid. Voevodsky computed the dual Steenrod algebra over \(\mathbb R\), showing that as a Hopf algebroid over 
\[
\mathbb M_2=\F_2[\rho,\tau],
\] 
the motivic homology of a point \cite{VoevodskyMod2}, we have
\[
\mathcal A_{\ast,\ast}=\mathbb M_2[\xi_1,\dots][\tau_0,\dots]/\big(\tau_i^2+\rho\tau_{i+1}+(\tau+\rho\tau_0)\xi_{i+1}\big).
\]
The element \(\rho\) is primitive, the left unit on \(\tau\) is the obvious inclusion, and the right unit is \(\tau+\rho\tau_0\). The coproducts on the \(\xi_i\) and the \(\tau_i\) are the classical ones \cite{VoevodskySteenrod}. 

Let 
\[
\mathcal E(\infty)=\mathbb M_2[\tau_0,\tau_1,\dots]/(\tau_i^2-\rho\tau_{i+1})
\]
be the quotient of the motivic dual Steenrod algebra by the ideal generated by the \(\xi_i\)s.
This is a Hopf algebroid under the motivic dual Steenrod algebra, and the generators \(\tau_i\) are now primitive. In particular, all of the interesting behavior in \(\Ext\) is determined by the left and right units on \(\tau\) (since \(\rho\), being in the Hurewicz image, is necessarily primitive). 

Just as classically, this Hopf algebroid is related to the homology of \(\BPGL\), via the cotensor product. Recall that if \(\Gamma\) is a Hopf algebroid, \(M\) is a right \(\Gamma\)-comodule with coproduct \(\psi_M\), and \(N\) a left \(\Gamma\)-comodule with coproduct \(\psi_N\), then the cotensor product of \(M\) and \(N\) is defined by an equalizer diagram
\[
\begin{tikzcd}
{M\Boxover{\Gamma} N}
	\ar[r]
	&
{M\otimes N}
	\ar[r, shift right=.5ex, "\psi_M\otimes 1"']
	\ar[r, shift left=.5ex, "1\otimes \psi_N"]
	&
{M\otimes\Gamma\otimes N}.
\end{tikzcd}
\]

\begin{theorem}[{\cite{BorghesiMoravaK}, \cite{OrmsbyExt}}]
We have an isomorphism of \(\mathcal A_\star\)-comodule algebras
\[
H_\ast(\BPGL;\F_2)\cong \mathcal A_\star\Boxover{\mathcal E(\infty)}\mathbb M_2.
\]
\end{theorem}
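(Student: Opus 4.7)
The plan is to follow Milnor's classical strategy for computing $H_\ast(\BP; \F_2)$, adapted to the motivic setting over $\R$. The idea is first to determine cohomology as a module over the motivic Steenrod algebra, and then to dualize.

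First, one computes $H^{*,*}(\MGL; \F_2)$ by combining the motivic Thom isomorphism with Voevodsky's calculation of $H^{*,*}(BGL_n; \F_2)$ in terms of motivic Chern classes. This identifies $H^{*,*}(\MGL; \F_2)$ as a free module over the motivic Steenrod algebra $\mathcal A^{*,*}$, so that dually $H_{\ast,\ast}(\MGL; \F_2)$ is an extension of scalars of $\mathcal A_\star$ along a polynomial $\mathbb M_2$-algebra on the Conner--Floyd-type classes. Second, one passes to $\BPGL$ via the $2$-local motivic splitting of $\MGL$ into suspensions of $\BPGL$, obtained from the motivic Quillen idempotent (which exists because the universal $2$-typical formal group law is still classified motivically). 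Extracting the appropriate summand identifies $H^{*,*}(\BPGL; \F_2)$ with the quotient of $\mathcal A^{*,*}$ by the left ideal generated by the Milnor basis elements dual to the polynomial generators $\xi_i$.

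The final step is dualization. Under the duality between $\mathcal A^{*,*}$ and $\mathcal A_{\ast,\ast}$, a quotient by a left ideal becomes a cotensor product over the corresponding quotient Hopf algebroid. The quotient of $\mathcal A_{\ast,\ast}$ by the ideal generated by the $\xi_i$'s is by construction $\mathcal E(\infty)$, and the resulting cotensor product description yields the claimed isomorphism of $\mathcal A_\star$-comodule algebras.

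The main obstacle, absent in the classical story, is the Hopf algebroid structure: the right unit $\eta_R(\tau) = \tau + \rho\tau_0$ is nontrivial, so one must carefully track left versus right $\mathbb M_2$-module structures, and verify that the motivic relations $\tau_i^2 = \rho\tau_{i+1}$ — a $\rho$-deformation of the classical $\tau_i^2 = 0$ — appear precisely as the consequence of dualizing the quotient map. Ensuring that this assembles into an isomorphism of comodule \emph{algebras}, rather than merely of bigraded $\mathbb M_2$-modules, is the substance of the calculation carried out in the cited references \cite{BorghesiMoravaK, OrmsbyExt}.
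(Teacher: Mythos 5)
The paper does not give a proof of this result; it is a citation to Borghesi and Ormsby, so there is no internal argument to compare against. Your outline correctly reproduces the Milnor-style strategy used in those references (Thom isomorphism, Chern classes, Quillen idempotent, dualization), and you correctly identify the distinctively motivic subtlety: the nontrivial right unit $\eta_R(\tau)=\tau+\rho\tau_0$ makes this a Hopf \emph{algebroid} computation, with the $\rho$-deformed relations $\tau_i^2=\rho\tau_{i+1}$.

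One step, however, is misstated in a way that would break the derivation if taken literally. $H^{*,*}(\MGL;\F_2)$ is not a free module over the full motivic Steenrod algebra $\mathcal A^{*,*}$: already classically $H^*(\MU;\F_2)$ is not free over $\mathcal A$, since the Milnor primitives $Q_i$ annihilate the Thom class, and freeness would force $\MGL$ to split as a wedge of suspensions of $H\F_2$. The correct input — the one the Thom isomorphism and Chern class computation actually provide — is that $H^{*,*}(\MGL;\F_2)$ is a direct sum of bidegree shifts of $\mathcal A^{*,*}\otimes_{E(\infty)}\mathbb M_2$, where $E(\infty)\subset\mathcal A^{*,*}$ is the sub-Hopf-algebroid of motivic Milnor primitives dual to the \emph{quotient} $\mathcal E(\infty)$. (Relatedly, the $Q_i$ are dual to the exterior-type generators $\tau_i$, not to the polynomial generators $\xi_i$ as you wrote.) It is precisely this induced-module structure over $\mathcal A^{*,*}$, and not freeness, that dualizes to the cotensor product $\mathcal A_\star\Boxover{\mathcal E(\infty)}\mathbb M_2$ in homology. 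With those two corrections, the rest of your plan — extracting the $\BPGL$ summand via the motivic Quillen idempotent and dualizing — is sound and matches what the cited references carry out.
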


Note that there are no \(\rho\)-torsion elements in either \(\mathbb M_2\) or \(\mathcal E(\infty)\), so we can divide uniquely by \(\rho\) various \(\rho\)-divisible elements.

\begin{theorem}[{\cite[Cor 5.2 and Thm 5.3]{HillRhoBockstein}}]\label{thm:ChangeOfRings}
The \(E_2\)-term of the Adams spectral sequence computing the homotopy of \(\BPGL\) is 
\[
E_2^{s,\star}=\Ext_{(\mathbb M_2,\mathcal E(\infty))}^{s,\star}(\mathbb M_2,\mathbb M_2).
\]
The elements \(\bar v_n(b)\) are detected by  
\[
\frac{d(\tau^{(1+2b)2^n})}{\rho^{2^{n+1}-1}}=\frac{\tau^{(1+2b)2^n}+(\tau+\rho\tau_0)^{(1+2b)2^{n}}}{\rho^{2^{n+1}-1}}\in
\Ext^{1,(2^{n+1}-1,2^{n}-1-2^{n+1}b)},
\]
where \(d\) is the cobar differential, and the class \(\rho\) by itself in \(\Ext^{0,(-1,-1)}\). The spectral sequence collapses with no exotic extensions.
\end{theorem}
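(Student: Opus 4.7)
The plan is to carry this out in three stages: a motivic Cartan--Eilenberg change of rings, a direct cobar computation identifying the representing cocycles, and a comparison argument establishing collapse and the absence of exotic multiplicative extensions.

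First, I would feed the description \(H_\ast(\BPGL;\F_2)\cong \mathcal A_\star \Boxover{\mathcal E(\infty)}\mathbb M_2\) recalled above into the standard change of rings formula, producing
\[
\Ext^{s,\star}_{\mathcal A_\star}\bigl(\mathbb M_2,\,H_\ast(\BPGL;\F_2)\bigr)\cong \Ext^{s,\star}_{\mathcal E(\infty)}(\mathbb M_2,\mathbb M_2).
\]
The coflatness of \(\mathcal A_\star\) as a right \(\mathcal E(\infty)\)-comodule is formal from the Hopf algebroid maps, and this step is structurally identical to the classical reduction of the mod \(2\) Adams \(E_2\)-term for \(\BP\) to \(\Ext\) over an exterior quotient.

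Second, I would identify the detecting cocycles by a direct cobar computation for \((\mathbb M_2,\mathcal E(\infty))\). Since each \(\tau_i\) is primitive and \(\rho\in\mathbb M_2\) is primitive, the element \(\rho\) represents itself in \(\Ext^{0,(-1,-1)}\). For \(\bar v_n(b)\), set \(m=(1+2b)2^n\) and compute in characteristic \(2\)
\[
\eta_R(\tau^m)-\tau^m=\bigl(\tau^{2^n}+\rho^{2^n}\tau_0^{2^n}\bigr)^{1+2b}-\tau^m.
\]
Iterated application of \(\tau_i^2=\rho\tau_{i+1}\) gives \(\tau_0^{2^n j}=\rho^{(2^n-1)j}\tau_n^j\), so the \(j\)-th binomial term carries a factor of \(\rho^{(2^{n+1}-1)j}\). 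The crucial \(2\)-adic input \(\nu_2(m)=n\) forces the \(j=1\) term to appear with odd coefficient, contributing the primitive \(\tau^{2^{n+1}b}\tau_n\) at the minimum power \(\rho^{2^{n+1}-1}\), while all \(j\ge 2\) terms are divisible by strictly higher powers of \(\rho\). Dividing by \(\rho^{2^{n+1}-1}\), which is legal because \(\mathcal E(\infty)\) is \(\rho\)-torsion-free, then produces a primitive cocycle in the bidegree \((2^{n+1}-1,\,2^n-1-2^{n+1}b)\) with leading term \(\tau^{2^{n+1}b}\tau_n\).

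For collapse and the absence of exotic extensions, I would first observe that the relations from Section 2.1 already hold at the level of cobar representatives: the relation \(\rho^{2^{n+1}-1}\bar v_n(b)=0\) holds because multiplying the representing cocycle by \(\rho^{2^{n+1}-1}\) recovers the honest coboundary \(d(\tau^{(1+2b)2^n})\), and the mixing relations \(\bar v_m(b)\bar v_n(c)=\bar v_m(b+2^{n-m}c)\bar v_n\) for \(n\ge m\) reduce to the same binomial identity. Together with \(\rho\), this yields a presentation of \(\Ext_{\mathcal E(\infty)}(\mathbb M_2,\mathbb M_2)\) that abstractly matches the presentation of \(\pi_\star\BPGL\) from Section 2.1, leaving no room for Adams differentials or for multiplicative lifts in higher filtration. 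As an independent check one can compare with the \(\rho\)-Bockstein spectral sequence: inverting \(\rho\) recovers a shifted copy of the classical \(\BP\)-Adams \(E_2\), which collapses, and Bockstein induction propagates collapse back to the motivic \(E_2\)-page. The main obstacle is the second step: pinning down that \((\tau+\rho\tau_0)^m-\tau^m\) is divisible by exactly \(\rho^{2^{n+1}-1}\) and that the quotient is a genuine new primitive rather than a coboundary requires a careful accounting of which monomials reduce via \(\tau_i^2=\rho\tau_{i+1}\) to which \(\rho\)-powers, using \(\nu_2(m)=n\) at precisely the right spot; once this is in hand, steps one and three become essentially formal.
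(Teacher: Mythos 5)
The paper itself does not prove Theorem~\ref{thm:ChangeOfRings}; it is cited verbatim from \cite{HillRhoBockstein} (Cor.~5.2 and Thm.~5.3). So there is no in-paper proof to compare against, and what you have written is a reconstruction of the cited argument.

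Your change-of-rings step and the cobar computation are essentially right. In particular the arithmetic is correct: writing $m=(1+2b)2^n$, the binomial expansion of $(\tau^{2^n}+\rho^{2^n}\tau_0^{2^n})^{1+2b}-\tau^m$ has $j$-th term carrying $\rho^{2^nj}$ from the explicit power and, after iterating $\tau_i^2=\rho\tau_{i+1}$ to get $\tau_0^{2^n}=\rho^{2^n-1}\tau_n$, a further $\rho^{(2^n-1)j}$, for a total $\rho^{(2^{n+1}-1)j}$; the $j=1$ coefficient $1+2b$ is odd, and all $j\ge 2$ terms are divisible by strictly higher $\rho$-powers, so $\rho^{2^{n+1}-1}$ is exactly the divisibility and the quotient is a cocycle by $\rho$-torsion-freeness of $\mathcal E(\infty)$. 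What you leave implicit is that this cocycle is a nontrivial $\Ext^1$ class (i.e.\ not itself a coboundary from $\Omega^0=\mathbb M_2$) and that it in fact detects the homotopy class $\bar v_n(b)$ defined by the $\rho$-Bockstein spectral sequence; both of these need a short but genuine argument, not just the existence of the cocycle.

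The soft spot is the collapse-and-no-exotic-extensions step. As written, ``yields a presentation of $\Ext$ that abstractly matches the presentation of $\pi_\star\BPGL$, leaving no room for differentials'' is circular: the presentation of $\pi_\star\BPGL$ was itself obtained by another spectral sequence ($\rho$-Bockstein or slice), and an abstract isomorphism of bigraded groups between $E_2$ and the target does not by itself force $E_2=E_\infty$ without a comparison of spectral sequences or a careful bidegree-by-bidegree counting argument. The $\rho$-Bockstein comparison you relegate to an ``independent check'' is actually the load-bearing argument: one compares the algebraic $\rho$-Bockstein spectral sequence computing $\Ext_{(\mathbb M_2,\mathcal E(\infty))}$ with the topological one computing $\pi_\star\BPGL$, uses that their $E_1$-pages and differentials agree, and feeds the known classical (or $\rho$-inverted, resp.\ $\tau$-inverted) base case into a Bockstein induction. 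That is the mechanism \cite{HillRhoBockstein} uses, and it should be promoted from a sanity check to the main argument for part three of the theorem.
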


The Hopf algebroid \(\big(\mathbb M_2,\mathcal E(\infty)\big)\) is computationally very simple: we have a primitive polynomial generator \(\rho\) and a second, non-primitive element \(\tau\). The ring \(\mathcal E(\infty)\) is not polynomial on
\[
\tau_0=\frac{\eta_L(\tau)-\eta_R(\tau)}{\rho}.
\] 
Instead, we have a kind of ``\(\rho\)-divided power algebra''. The real power of Theorem~\ref{thm:ChangeOfRings} is that all of the generators of \(\Ext^1\) can be realized as \(\rho\)-fractional multiples of the usual cobar differential on powers of \(\tau\). That will allow us to easily compute Massey products in \(\Ext\).

It is helpful in what follows to blur the chromatic heights of the elements, focusing instead on the powers of \(\tau\) and their differentials. 

\begin{notation}
For each \(k\geq 1\), let \(m_k=2^{\nu_2(k)+1}-1\) for  \(\nu_2(k)\)  the \(2\)-adic valuation of \(k\) and let
\[
\xk{k}=\frac{\eta_L(\tau)^k-\eta_R(\tau)^k}{\rho^{m_k}}.
\]
Note that \(\bar v_n(b)\) is detected by the element \(\bar{v}\big((1+2b)2^n\big)\).
\end{notation}
It is not hard to show that \(\rho^{m_k}\) is the largest power of \(\rho\) which divides the cobar differential on \(\tau^k\). 

\begin{theorem}\label{thm:HopfAlgBracket}
Let \(k\) and \(\ell\) be non-negative, and let \(r\), \(s\), and \(t\) be natural numbers such that
\[
s+r\geq m_k\text{ and }s+t\geq m_{\ell}.
\]
Then in \(\Ext\), we have an inclusion
\[
\rho^{m_{k+\ell}-m_{k}-m_{\ell}+r+s+t}\xk{k+\ell}\in\big\langle\rho^r \xk{k},\rho^{s},\rho^t\xk{\ell}\big\rangle.
\]
\end{theorem}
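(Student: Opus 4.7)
The plan is to realize the bracket directly at the cochain level in the cobar complex of the Hopf algebroid \((\mathbb{M}_2, \mathcal{E}(\infty))\). The key observation, which makes everything work, is that since \(\rho\) is primitive, the \(0\)-cochain \(\tau^k \in \mathbb{M}_2\) has cobar differential
\[
d(\tau^k) = \eta_R(\tau)^k - \eta_L(\tau)^k = \rho^{m_k}\xk{k},
\]
so \(\tau^k\) is a canonical witness, in the cobar complex, to the fact that \(\rho^{m_k}\) annihilates \(\xk{k}\) in \(\Ext\). All of the bracket manipulations are engineered to exploit this single identity.

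Using this, I would take as defining cochains
\[
A = \rho^{r+s-m_k}\tau^k, \qquad B = \rho^{s+t-m_\ell}\tau^\ell,
\]
which are well-defined precisely because of the hypotheses \(r+s \geq m_k\) and \(s+t \geq m_\ell\). A direct calculation gives \(dA = (\rho^r\xk{k})\cdot \rho^s\) and \(dB = \rho^s\cdot(\rho^t\xk{\ell})\); in particular both products vanish in \(\Ext\), so the Massey product is defined, and the standard cocycle representative is
\[
A\cdot(\rho^t\xk{\ell}) \;+\; (\rho^r\xk{k})\cdot B.
\]
Keeping in mind that in the cobar product a \(0\)-cochain acts on a \(1\)-cochain by \(\eta_L\) on the left and by \(\eta_R\) on the right, this expands (mod \(2\)) to
\[
\frac{\rho^{r+s+t}}{\rho^{m_k+m_\ell}}\Bigl[\tau^k\bigl((\tau+\rho\tau_0)^\ell - \tau^\ell\bigr) + \bigl((\tau+\rho\tau_0)^k - \tau^k\bigr)(\tau+\rho\tau_0)^\ell\Bigr].
\]

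The last step is a clean telescoping: the two middle terms \(\tau^k(\tau+\rho\tau_0)^\ell\) cancel, leaving \((\tau+\rho\tau_0)^{k+\ell} - \tau^{k+\ell} = \rho^{m_{k+\ell}}\xk{k+\ell}\) inside the brackets, so the representative simplifies to \(\rho^{m_{k+\ell}-m_k-m_\ell+r+s+t}\xk{k+\ell}\), as claimed. There is no deep obstacle here; the only care needed is bookkeeping, since it is precisely the asymmetry \(\eta_R(\tau) - \eta_L(\tau) = \rho\tau_0\) that both forces the normalization of the \(\xk{k}\) and produces the telescope. Once the left/right conventions are fixed, the cancellation is automatic and forces the inclusion, without any need to address indeterminacy.
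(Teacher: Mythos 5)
Your proof is correct and follows essentially the same route as the paper: both pick the canonical null\mbox{-}homotopies $\rho^{r+s-m_k}\tau^k$ and $\rho^{s+t-m_\ell}\tau^\ell$, form the standard Massey-product cocycle $A\cdot(\rho^t\xk{\ell})+(\rho^r\xk{k})\cdot B$ with $\eta_L$/$\eta_R$ acting on the appropriate sides, and then simplify. The only cosmetic difference is that the paper invokes the bimodule-derivation property of the cobar differential where you carry out the telescoping by hand, but these are the same computation.
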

\begin{proof}
There are preferred null-homotopies of \(\xk{k}\rho^{r+s}\) and \(\xk{\ell}\rho^{s+t}\):
\[
\rho^{r+s-m_k}\tau^k\mapsto \rho^{r+s}\xk{k}\text{ and }\rho^{s+t-m_{\ell}}\tau^{\ell}\mapsto\rho^{s+t}\xk{\ell}.
\]
The bracket in question then contains
\[
\rho^{s+t+r-m_\ell}\xk{k}\eta_R(\tau^{\ell})+\rho^{r+s+t-m_k}\tau^{k}\xk{\ell}.
\]
Unpacking the \(\rho\)-fractions giving \(\xk{k}\) and \(\xk{\ell}\) and recalling that the cobar differential is a bimodule derivation, we see that this particular element is 
\[
\rho^{m_{k+\ell}-m_k-m_\ell+r+s+t}\xk{k+\ell}.\qedhere
\]
\end{proof}

\begin{remark}
These same relations will hold for any similar Hopf algebroid with a primitive element playing the role of \(\rho\) and the differential on a class analogous to \(\tau\) involving only \(\rho\) divisibility.  For example, we see the same brackets involving the \(\alpha\)-family in the Miller--Ravenel--Wilson Chromatic Spectral Sequence approach to understanding the classical Adams--Novikov \(E_2\)-term \cite{MRW}.
\end{remark}

\begin{remark}
When \(n=0\), Theorem~\ref{thm:WeirdBracket} gives the formula
\[
\rho\eta\in\langle 2,\rho,2\rangle,
\]
since \(\rho\bar{v}_{1}\) is the ordinary, topological \(\eta\). Since \(2\rho=0\) in the homotopy of \(\MGL\) over \(\mathbb R\), this is recovering a universal classical formula in an \(A_\infty\)-ring spectrum that forming the balanced bracket with \(2\) gives \(\eta\) multiplication \cite{SagaveUniversal}. 
\end{remark}

\subsection{Convergence}

Moss's Convergence Theorem allows us to lift these Massey products in the Adams spectral sequence to Toda brackets in homotopy \cite{MossConvergence}. Normally in applying Moss's Convergence Theorem, we have to worry about so-called ``crossing differentials''. In our case, the Adams spectral sequence collapsed at \(E_2\) with no differentials. We deduce the following.

\begin{theorem}\label{thm:WeirdBracket}
Let \(b\) and \(c\) be nonegative numbers. Let \(r\), \(s\), and \(t\) be such that
\[
r+s,s+t\geq m_n=(2^{n+1}-1).
\]
Let \(j\) and \(d\) be such that we have
\[
1+b+c=(1+2d)2^j.
\]
Then, modulo elements of higher Adams filtration, we have an inclusion
\[
\rho^{(2^{n+j+2}-2^{n+2}+r+s+t+1)}\bar{v}_{n+j+1}(d)\in
\big\langle \rho^r\bar{v}_n(b),\rho^s,\rho^t\bar{v}_n(c)\big\rangle.
\]
\end{theorem}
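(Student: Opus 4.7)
The plan is to obtain this from Theorem~\ref{thm:HopfAlgBracket} by a direct substitution together with a dictionary between the \(\xk{\cdot}\) and the chromatic generators \(\bar{v}_n(\cdot)\). First, I would set
\[
k=(1+2b)2^n\qquad\text{and}\qquad \ell=(1+2c)2^n,
\]
so that, by the identification following the definition of \(\xk{\cdot}\), the classes \(\xk{k}\) and \(\xk{\ell}\) are the \(\Ext\)-representatives of \(\bar{v}_n(b)\) and \(\bar{v}_n(c)\), respectively.  Since \(1+2b\) and \(1+2c\) are odd, \(\nu_2(k)=\nu_2(\ell)=n\), giving \(m_k=m_\ell=2^{n+1}-1=m_n\).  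In particular, the hypotheses \(r+s\geq m_n\) and \(s+t\geq m_n\) of Theorem~\ref{thm:WeirdBracket} are exactly the hypotheses needed to invoke Theorem~\ref{thm:HopfAlgBracket}.

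Next, I would compute the sum \(k+\ell=(1+b+c)2^{n+1}=(1+2d)2^{n+j+1}\), so that \(\nu_2(k+\ell)=n+j+1\) and \(m_{k+\ell}=2^{n+j+2}-1\).  Under the identification \(\xk{(1+2d)2^{n+j+1}}\leftrightarrow \bar{v}_{n+j+1}(d)\), the class produced by Theorem~\ref{thm:HopfAlgBracket} becomes \(\rho^{N}\bar{v}_{n+j+1}(d)\) where
\[
N=m_{k+\ell}-m_k-m_\ell+r+s+t=(2^{n+j+2}-1)-2(2^{n+1}-1)+r+s+t=2^{n+j+2}-2^{n+2}+1+r+s+t,
\]
which matches the exponent in the statement.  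Applying Theorem~\ref{thm:HopfAlgBracket} with these choices therefore gives the desired inclusion at the level of \(\Ext\).

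Finally, to pass from a Massey product in \(\Ext_{(\mathbb M_2,\mathcal E(\infty))}\) to a Toda bracket in \(\pi_{\star,\star}\BPGL\), I would invoke the Moss convergence theorem, whose hypotheses are trivially satisfied here because Theorem~\ref{thm:ChangeOfRings} asserts that the Adams spectral sequence collapses with no exotic extensions; in particular, none of the three defining representatives can support a crossing differential.  The inclusion then transfers verbatim to the homotopy ring.  I do not expect any genuine obstacle: the content is entirely in Theorem~\ref{thm:HopfAlgBracket}, and what remains is bookkeeping with \(2\)-adic valuations and a routine appeal to Moss.  If anything, the only subtlety worth flagging is that the statement is an inclusion rather than an equality, which is forced by the usual indeterminacy of Toda brackets and is all that can be extracted from Moss without further input.
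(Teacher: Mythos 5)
Your proof is correct and follows essentially the same route the paper takes: the paper's own ``proof'' of Theorem~\ref{thm:WeirdBracket} is literally the sentence that it is an immediate corollary of Theorem~\ref{thm:HopfAlgBracket}, obtained by the substitution \(k=(1+2b)2^n\), \(\ell=(1+2c)2^n\) and the dictionary \(\bar v_n(b)\leftrightarrow\xk{(1+2b)2^n}\) stated in the Notation. Your arithmetic on the \(2\)-adic valuations and the exponent \(m_{k+\ell}-m_k-m_\ell+r+s+t=2^{n+j+2}-2^{n+2}+1+r+s+t\) is exactly right. The one point where you are more careful than the paper is the explicit appeal to Moss's convergence theorem to pass from the Massey product in \(\Ext_{(\mathbb M_2,\mathcal E(\infty))}\) to a Toda bracket in \(\pi_{\star,\star}\BPGL\); the paper elides this step, but since Theorem~\ref{thm:ChangeOfRings} gives a collapsing Adams spectral sequence with no exotic extensions, the crossing-differential hypotheses of Moss are vacuous and your argument goes through.
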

As stated, the theorem has markedly limited appeal, since there are in general a great many classes in higher Adams filtration.
Using Proposition~\ref{prop:Juggling} and Lemma~\ref{lem:RhoUniqueness}, we can remove some ambiguity, since the Adams filtration here coincides with the monomial filtration in induced by \(I\). 

\begin{corollary}
Let \(b\) and \(c\) be nonnegative numbers, and let \(j\) and \(d\) be such that
\[
1+b+c=(1+2d)2^j.
\]
Then for any \(k\geq (2^{\lfloor\tfrac{n}{2}\rfloor+1}-1)\), we have
\[
\rho^{2^{n+k+2}-2^{n+2}+k+1}\bar{v}_{n+j+1}(d)=
\langle \bar{v}_n(b),\rho^{2^{n+1}-1+k},\bar{v}_n(c)\rangle.
\]
\end{corollary}

\section{Application to Morava \texorpdfstring{\(K\)}{K}-theories}\label{sec:KGL}
The transchromatic brackets give us some surprising consequences for the action of \(\pi_{(\ast,\ast)}\MGL\) on various quotients. Since \(\MGL\) is a commutative monoid, we have a good category of \(\MGL\)-modules. 
For a single element \(x_i\), we define the quotient by \(x_i\) via the cofiber sequence
\[
\Sigma^{|x_{i}|}\MGL\xrightarrow{x_{i}}\MGL\to\MGL/x_{i},
\]
and for a family of elements \(x_{1},x_{2}\dots\), we form
\[
\MGL/(x_{1},x_2,\dots)= \MGL/x_{1}\smashover{\MGL}  \MGL/x_{2}\smashover{\MGL} \dots.
\]

\begin{definition}
For each \(n\geq 0\), let 
\[
k_{GL}(n)=\BPGL/(\bar{v}_{0},\dots,\bar{v}_{n-1},\bar{v}_{n+1},\dots)
\]
be the \(n\)th connective motivic Morava \(K\)-theory. 

Let \(K_{GL}(n) = \bar v_n^{-1}k_{GL}(n)\) be motivic Morava \(K\)-theory. 
\end{definition}

Using the Hopkins--Hoyois--Morel determination of the slices of \(\MGL\) \cite{HopHoyMorel}, Levine--Tripathi determined the slices for \(k_{GL}(n)\) (and related quotients).

\begin{theorem}[{\cite[Cor. 4.6]{LevineTripathiQuotients}}]
The slice associated graded for \(k_{GL}(n)\) is
\[
Gr(k_{GL}(n))=\bigvee_{m\geq 0} \Sigma^{(2^{n+1}-2)m,(2^{n}-1)m}H\F_{2}.
\]
\end{theorem}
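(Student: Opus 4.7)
The plan is to build $k_{GL}(n)$ as an iterated cofiber of $\BPGL$-modules and propagate the slice filtration through the construction. The starting input is the Hopkins--Hoyois--Morel computation of the slices of $\MGL$, which together with the $2$-local splitting of $\MGL$ into suspensions of $\BPGL$ gives
\[
Gr(\BPGL) \;=\; H\mathbb{Z}_{(2)}[\bar{v}_1,\bar{v}_2,\ldots],
\]
where each $\bar{v}_i$ has bidegree $(2^{i+1}-2,\,2^i-1)$ and slice filtration $2^i-1$, with the homotopy classes $\bar{v}_i\in\pi_{(2^{i+1}-2,\,2^i-1)}\BPGL$ represented on the slice $E_1$-page by the corresponding polynomial generators.

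Next, I would observe that multiplication by $\bar{v}_i$ is a $\BPGL$-module map shifting slice filtration by $2^i-1$, so the defining cofiber sequence
\[
\Sigma^{(2^{i+1}-2,\,2^i-1)}\BPGL \xrightarrow{\bar{v}_i} \BPGL \to \BPGL/\bar{v}_i
\]
is filtered, and after passing to slice associated gradeds becomes multiplication by the polynomial generator $\bar{v}_i$ followed by its algebraic cofiber. Since $\bar{v}_0=2,\bar{v}_1,\bar{v}_2,\ldots$ form a regular sequence in $H\mathbb{Z}_{(2)}[\bar{v}_1,\bar{v}_2,\ldots]$, iterating the quotient is clean and yields
\[
Gr(k_{GL}(n)) \;=\; H\mathbb{Z}_{(2)}[\bar{v}_1,\bar{v}_2,\ldots] \big/ \bigl(\bar{v}_i : i\neq n\bigr) \;=\; \mathbb{F}_2[\bar{v}_n].
\]
Since $\bar{v}_n^m$ lies in bidegree $\bigl((2^{n+1}-2)m,(2^n-1)m\bigr)$, this is precisely the wedge in the statement.

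The hard part is verifying that the slice filtration really descends through the iterated cofiber construction without introducing spurious slices, i.e.\ that the filtration shift induced by multiplication by $\bar{v}_i$ is sharp rather than merely an upper bound. This reduces to showing that the slice tower of $\BPGL$ admits a multiplicative refinement compatible with its motivic ring spectrum structure, so that each $\bar{v}_i$ lifts to a self-map of the slice tower implementing the polynomial multiplication on slices; once this is in hand, the desired identification is the purely algebraic computation above.
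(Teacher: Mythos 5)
This theorem is not proved in the paper at hand; it is quoted verbatim from Levine--Tripathi (their Corollary~4.6), so there is no ``paper's own proof'' to compare against. That said, your sketch accurately reflects the strategy Levine--Tripathi actually use: start from the Hopkins--Hoyois--Morel identification
\[
Gr(\BPGL)\simeq H\Z_{(2)}[\bar v_1,\bar v_2,\dots],
\]
show that the slice filtration is compatible with the $\MGL$-module cofiber sequences defining the quotients, observe that $2,\bar v_1,\dots,\bar v_{n-1},\bar v_{n+1},\dots$ is a regular sequence on the associated graded, and read off $\F_2[\bar v_n]$, whose monomial basis gives exactly the stated wedge. Your degree bookkeeping ($\bar v_n^m$ in bidegree $((2^{n+1}-2)m,(2^n-1)m)$ and slice weight $(2^n-1)m$) is correct.

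The one place where you stop short is precisely the point you flag: ``the slice tower of $\BPGL$ admits a multiplicative refinement compatible with its ring spectrum structure.'' That is not a reduction so much as the entire technical content of the theorem. In Levine--Tripathi this is handled by establishing that for a class $x\in\pi_{(2d,d)}\MGL$ in the image of the Lazard ring, the $d$-fold $\mathbb{P}^1$-suspension shift induced by $x$ is exact on the slice tower (no spurious slices, no filtration jumps), and then passing to a filtered colimit over the finitely many $\bar v_i$'s being inverted/killed, using that slices commute with filtered colimits. Until that is supplied, the argument that quotienting the spectrum induces quotienting of the associated graded by a regular sequence is an expectation rather than a proof; so your proposal is a correct high-level account of the cited result, but it is an outline with its central lemma left open.
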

Moreover, the associated slice spectral sequence is a spectral sequence of modules over the slice spectral sequence for \(\BPGL\), which is spelled out in \cite{Kylling}. 
For degree reasons, the spectral sequence for \(k_{GL}(n)\) is even simpler; we have thrown away many of the classes which supported or could have supported differentials. 

\begin{theorem}[{\cite[Theorem 9.6]{Kylling}, \cite{YagitaAHSS}}]
The slice spectral sequence for \(k_{GL}(n)\) has \(E_{2}\)-term
\[
\mathbb F_{2}[\rho,\tau,\bar{v}_{n}]\{\iota\},
\]
where \(|\rho|=(-1,-1)\), where \(|\tau^{2}|=(0,-2)\), and where \(|\bar{v}_{n}|=(2^{n+1}-2,2^{n}-1)\).

As an \(\mathbb F_{2}[\rho,\bar{v}_{n},\tau^{2^{n+1}}]\)-module, the only non-trivial differentials are generated by 
\[
d_{2^{n+1}-1}(\tau^{a+2^{n}}\iota)=\rho^{2^{n+1}-1}\bar{v}_{n}\tau^{a}\iota,
\]
where \(0\leq a\leq 2^{n}-1\).
\end{theorem}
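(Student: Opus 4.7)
The plan is to combine the Levine--Tripathi slice associated graded with the module structure over Kylling's slice spectral sequence for \(\BPGL\). The \(E_2\)-term is essentially read off the slices: each slice is a suspension \(\Sigma^{(2^{n+1}-2)m,(2^n-1)m} H\F_2\), and since the bigraded motivic homotopy of \(H\F_2\) over \(\R\) is \(\mathbb M_2=\F_2[\rho,\tau]\), summing over the slices yields \(\F_2[\rho,\tau]\{\iota,\bar{v}_n\iota,\bar{v}_n^2\iota,\dots\}=\F_2[\rho,\tau,\bar{v}_n]\{\iota\}\), where \(\bar{v}_n^m\) is the shift generator coming from the \(m\)th slice.

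For the differentials I would exploit the module spectral sequence structure: \(k_{GL}(n)\) is a \(\BPGL\)-module spectrum, so its slice spectral sequence is a module spectral sequence over Kylling's slice spectral sequence for \(\BPGL\). The latter has \(d_{2^{i+1}-1}(\tau^{2^i})=\rho^{2^{i+1}-1}\bar{v}_i\) for every \(i\). In \(k_{GL}(n)\) the classes \(\bar{v}_i\) vanish for \(i\neq n\), so all but one of these base differentials vanish and the only surviving one is \(d_{2^{n+1}-1}(\tau^{2^n})=\rho^{2^{n+1}-1}\bar{v}_n\). Moreover, in characteristic \(2\) the Leibniz rule forces \(d_r(\tau^{2^{n+1}})=d_r\bigl((\tau^{2^n})^2\bigr)=0\), so \(\tau^{2^{n+1}}\) is a permanent cycle, which justifies working relative to the ring \(\F_2[\rho,\bar{v}_n,\tau^{2^{n+1}}]\).

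The asserted generating differentials then follow by a direct Leibniz computation. For \(0\leq a<2^n\) the class \(\tau^a\) is a permanent cycle through stage \(2^{n+1}-2\), since any potential differential on \(\tau^a\) would have to hit a multiple of some \(\bar{v}_i\) with \(i<n\) and those all vanish in \(k_{GL}(n)\). Hence
\[
d_{2^{n+1}-1}(\tau^{a+2^n}\iota)=\tau^a\cdot d_{2^{n+1}-1}(\tau^{2^n})\cdot \iota =\rho^{2^{n+1}-1}\bar{v}_n\tau^a\iota.
\]
Every exponent \(N\geq 2^n\) has a unique decomposition \(N=2^{n+1}q+2^n+a\) with \(q\geq 0\) and \(0\leq a<2^n\), so every differential on a pure \(\tau\)-monomial follows from those listed by the \(\F_2[\rho,\bar{v}_n,\tau^{2^{n+1}}]\)-action.

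The main obstacle is ruling out longer differentials on the \(E_{2^{n+1}}\)-page. Since \(\rho\), \(\bar{v}_n\), and \(\tau^{2^{n+1}}\) are already permanent cycles and the surviving classes break cleanly into the permanent cycles \(\tau^a\iota\) for \(0\leq a<2^n\) together with their \(\rho,\bar{v}_n,\tau^{2^{n+1}}\)-multiples (modulo the \(\rho\)-torsion created by the \(d_{2^{n+1}-1}\)-differentials), a short bidegree sparseness argument in the slice \((s,t)\)-grading shows there is no class available to serve as the target of any higher differential from a surviving generator. Alternatively, one can match the resulting \(E_\infty\)-page against the answer obtained for \(k_{GL}(n)\) by passing the \(\rho\)-Bockstein computation of Theorem~\ref{thm:ChangeOfRings} through the appropriate iterated quotient, which has the right dimensions to force collapse.
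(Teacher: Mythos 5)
This theorem is cited in the paper (from Kylling, Theorem 9.6, and from Yagita); the paper does not prove it, so there is no internal argument to compare against. Taken on its own, your reconstruction follows the natural route (Levine--Tripathi for the \(E_2\)-page, the \(\BPGL\)-module structure of the slice spectral sequence for the differentials, sparseness for collapse), but as written it has two places that need shoring up.

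First, the Leibniz step \(d_{2^{n+1}-1}(\tau^{a+2^n}\iota)=\tau^a\cdot d_{2^{n+1}-1}(\tau^{2^n})\cdot\iota\) is not literally a module-Leibniz computation: the slice \(E_2\)-term of \(\BPGL\) is \(\mathbb Z_{(2)}[\rho,\tau^2,\bar v_1,\dots]\), which contains only even powers of \(\tau\), so the odd powers \(\tau^a\) you multiply by do not live in the ring spectral sequence. The fix is to factor \(\tau^{a+2^n}\iota=\tau^{2^n}\cdot(\tau^a\iota)\) with \(\tau^{2^n}\) genuinely an element of the ring \(E_2\) (since \(2^n\) is even for \(n\geq 1\)), apply Leibniz, and separately show \(d_r(\tau^a\iota)=0\) for \(0\leq a<2^n\) and all \(r\leq 2^{n+1}-1\) for degree reasons. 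That vanishing is where the actual sparseness work lives: you assert it, but the reason the potential targets of \(d_r\) on \(\tau^a\iota\) involve ``some \(\bar v_i\) with \(i<n\)'' is not a module-structure argument — those \(\bar v_i\) are simply absent from \(E_2(k_{GL}(n))\), so the correct argument is that between slice filtration \(0\) and the filtration of \(\bar v_n\) there is nothing in the \(E_2\)-page, so no short differential has a target. Said differently, the spacing of the slices \(\Sigma^{(2^{n+1}-2)m,(2^n-1)m}H\F_2\) forces the first possible differential on a filtration-zero class to have length at least \(2^{n+1}-1\).

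Second, you state the vanishing of \(\bar v_i\cdot\iota\) for \(i\neq n\) as though it were immediate. This does hold, but it is a statement about the image of the map of \(E_2\)-terms induced by \(\BPGL\to k_{GL}(n)\) through the Hopkins--Hoyois--Morel/Levine--Tripathi description of the slices; it should be flagged rather than left implicit, because the whole argument pivots on it (it is what makes \(\tau^{2^{2^j}}\) survive the shorter differentials). Similarly, your final collapse argument on the \(E_{2^{n+1}}\)-page is only gestured at; an explicit bidegree check (along the lines of Lemma~\ref{lem:Sparseness} in this paper, which carries out exactly the quilt-shaped sparseness you need) or the alternative route through the \(\rho\)-Bockstein computation (Theorem~\ref{thm:ChangeOfRings}) would close it. With those repairs the argument is sound and matches what one expects Kylling's proof to do.
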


Away from the subalgebra \(\mathbb F_{2}[\rho]\), the localization map
\[
k_{GL}(n)\to K_{GL}(n)
\]
is injective. The latter is an \(\MGL[\bar{v}_{n}^{-1}]\)-module. In \(\MGL[\bar{v}_{n}^{-1}]\), the class \(\tau^{2^{n+1}}\)  is a permanent cycle and multiplication by this is well-defined in \(K_{GL}(n)\). By injectivity of the localization, we may therefore view everything as being a module over
\[
	\mathbb Z_{2}[\rho,\bar{v}_{n},\tau^{2^{n+1}}].
\]

\begin{corollary}\label{cor:ModuleGenerators}
As a module over \(\mathbb Z_{2}[\rho,\bar{v}_{n},\tau^{2^{n+1}}]\), the homotopy of \(k_{GL}(n)\) is generated by the classes \(\tau^{a}\iota\) for \(0\leq a\leq (2^{n}-1)\).
\end{corollary}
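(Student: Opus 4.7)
The plan is to identify module generators at the $E_\infty$-page of the slice spectral sequence for $k_{GL}(n)$, and then promote them to homotopy generators via a standard convergence argument.

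First I will read off $E_\infty$. By Kylling's theorem recalled just above, the differentials in the slice spectral sequence are $\mathbb F_2[\rho,\bar{v}_n,\tau^{2^{n+1}}]$-linear and generated by $d_{2^{n+1}-1}(\tau^{a+2^n}\iota)=\rho^{2^{n+1}-1}\bar{v}_n\tau^a\iota$ for $0\leq a\leq 2^n-1$, and no higher differentials appear. Putting each monomial of $\mathbb F_2[\rho,\tau,\bar{v}_n]\{\iota\}$ in the normal form $\rho^i\bar{v}_n^j(\tau^{2^{n+1}})^q\tau^a\iota$ with $0\leq a<2^{n+1}$, the $d_{2^{n+1}-1}$-cycles are precisely those with $a<2^n$, and the boundaries are the $\mathbb F_2[\rho,\bar{v}_n,\tau^{2^{n+1}}]$-submodule generated by $\rho^{2^{n+1}-1}\bar{v}_n\tau^a\iota$ for $0\leq a\leq 2^n-1$. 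Consequently, as an $\mathbb F_2[\rho,\bar{v}_n,\tau^{2^{n+1}}]$-module, $E_\infty$ is generated by $\{\tau^a\iota:0\leq a\leq 2^n-1\}$.

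Next I will lift to homotopy. The classes $\rho$, $\bar{v}_n$, and $\tau^{2^{n+1}}$ lift to genuine elements of $\pi_\star k_{GL}(n)$ (the last via the injectivity of the $\bar{v}_n$-localization noted just before the corollary), giving $\pi_\star k_{GL}(n)$ an $R=\mathbb Z_{(2)}[\rho,\bar{v}_n,\tau^{2^{n+1}}]$-action compatible with the slice filtration. For any $y\in\pi_\star k_{GL}(n)$ of slice filtration $\geq f$, its $E_\infty^f$-class has the form $\sum r_a\tau^a\iota$; lifting each $r_a$ to $\tilde r_a\in R$ and replacing $y$ by $y-\sum \tilde r_a\tau^a\iota$ strictly raises the slice filtration of the leading term. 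Iterating and appealing to the strong convergence of the slice spectral sequence for $k_{GL}(n)$ expresses $y$ as an $R$-linear combination of the $\tau^a\iota$'s.

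The only technical point is the strong convergence/completeness of the slice tower on $k_{GL}(n)$, which is standard for connective motivic spectra. No extension obstructions arise, since we only claim generation and not a presentation; in particular, potential hidden multiplications of the kind studied in the main theorem of the paper only constrain the structure of the $R$-module $\pi_\star k_{GL}(n)$, not the generating set.
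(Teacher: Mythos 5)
Your proposal is correct and takes essentially the same approach as the paper: the paper states this corollary without an explicit proof, treating it as immediate from the preceding description of the slice $E_2$-term with its $\mathbb F_2[\rho,\bar{v}_n,\tau^{2^{n+1}}]$-linear differentials and from the $\mathbb Z_{(2)}[\rho,\bar{v}_n,\tau^{2^{n+1}}]$-module structure on homotopy established via the localization $k_{GL}(n)\to K_{GL}(n)$. Your identification of $E_\infty$-generators and the standard convergence-and-lift-leading-terms argument simply makes explicit the details the paper leaves to the reader.
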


Although the spectral sequence collapses here, we have a surprising number of non-trivial multiplications by the \(\bar{v}_{k}\)-generators that we killed to form \(k_{GL}(n)\), including non-trivial multiplications by \(2\). These are all detected by our brackets, using the sparseness of the spectral sequence (and hence sparseness of the bigraded homotopy groups). 

We have a simple consequence of the module structure and the presence of \(\tau^{2^{n+1}}\): we need only check a small number of hidden extensions.

\begin{corollary}
We need only determine hidden extensions of the form \(\bar{v}_k(b)\tau^a\iota\) for \(0\leq b\leq (2^{n-k}-1)\).
\end{corollary}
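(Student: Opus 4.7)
The plan is to use the $\mathbb Z_{(2)}[\rho, \bar v_n, \tau^{2^{n+1}}]$-module structure on $\pi_{\ast,\ast} k_{GL}(n)$ to reduce every hidden multiplication $\bar v_k(b)\tau^a\iota$ with $b \geq 2^{n-k}$ to the listed range. Specifically, I aim to establish the operator identity
\[
\bar v_k(b) = \tau^{2^{n+1}} \cdot \bar v_k(b - 2^{n-k})
\qquad\text{on } \pi_{\ast,\ast} k_{GL}(n),\ \text{for } b \geq 2^{n-k}.
\]
Iterating this on $\tau^a \iota$ expresses any hidden extension $\bar v_k(b)\tau^a\iota$ as $(\tau^{2^{n+1}})^q \cdot \bar v_k(b')\tau^a\iota$ for some $0 \leq b' \leq 2^{n-k} - 1$. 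Since $\tau^{2^{n+1}}$ is part of the module structure and Corollary~\ref{cor:ModuleGenerators} supplies the listed generators, this yields the claim.

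To establish the identity, I first apply the $\BPGL$-relation $\bar v_m(b')\bar v_n(c) = \bar v_m(b' + 2^{n-m}c)\bar v_n$ from Subsection 2.1 (valid since $n \geq m = k$), taking $b' = b - 2^{n-k}$ and $c = 1$, to obtain
\[
\bar v_k(b - 2^{n-k}) \cdot \bar v_n(1) = \bar v_k(b) \cdot \bar v_n
\qquad\text{in } \pi_{\ast,\ast}\BPGL.
\]
Pushing this into $\pi_{\ast,\ast} K_{GL}(n)$, I identify $\bar v_n(1)\cdot \bar v_n^{-1}$ with the canonical $\tau^{2^{n+1}}$: by Theorem~\ref{thm:ChangeOfRings}, $\bar v_n(1)$ is detected by $\bar v_n\tau^{2^{n+1}}$, so $\bar v_n(1)\bar v_n^{-1}$ is detected by $\tau^{2^{n+1}}$ in the slice $E_\infty$-page of $K_{GL}(n)$, and sparseness in bidegree $(0, -2^{n+1})$ forces on-the-nose equality. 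Dividing the $\BPGL$-relation by $\bar v_n$ in $K_{GL}(n)$ and invoking the injectivity of the localization $k_{GL}(n) \to K_{GL}(n)$ (outside the $\mathbb F_2[\rho]$ subalgebra, noted just before Corollary~\ref{cor:ModuleGenerators}, which is exactly where the classes $\bar v_k(b)\tau^a\iota$ live) lifts the identity back to an operator equality on $\pi_{\ast,\ast} k_{GL}(n)$.

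The main obstacle is the identification of $\bar v_n(1)\bar v_n^{-1}$ with $\tau^{2^{n+1}}$ in $\pi_{\ast,\ast} K_{GL}(n)$ as an honest equality rather than merely agreement modulo higher slice filtration. Sparseness of the slice $E_\infty$-page of $K_{GL}(n)$ in the relevant bidegree (only the monomial $\tau^{2^{n+1}}$ contributes) makes this immediate, but it is the one place where spectral-sequence considerations, rather than pure algebra in $\BPGL$, enter the argument.
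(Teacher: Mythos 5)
Your argument is correct and is essentially the paper's one-line proof, fleshed out: the paper simply asserts the relation \(\bar{v}_k(b+2^{n-k})=\bar{v}_k(b)\tau^{2^{n+1}}\) as a consequence of the module structure over \(\mathbb{Z}_{(2)}[\rho,\bar{v}_n,\tau^{2^{n+1}}]\), whereas you carefully derive it from the \(\BPGL\)-relation \(\bar{v}_k(b')\bar{v}_n(1)=\bar{v}_k(b'+2^{n-k})\bar{v}_n\), the identification \(\tau^{2^{n+1}}=\bar{v}_n(1)\bar{v}_n^{-1}\) in \(K_{GL}(n)\), and injectivity of \(k_{GL}(n)\to K_{GL}(n)\) on the relevant classes. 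Same approach, just with the justification for the paper's asserted identity spelled out.
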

\begin{proof}
Since there is a class \(\tau^{2^{n+1}}\), we have 
\(
\bar{v}_k(b+2^{n-k})=\bar{v}_k(b)\tau^{2^{n+1}}.\qedhere
\)
\end{proof}

\begin{remark}\label{rem:aboutfig}
While going through our analysis of extensions, the reader is encouraged to consult Figure~\ref{fig:kC23} which shows the slice associated graded for the bigraded homotopy groups of \(k_{GL}(3)\). The grading is the usual motivic bigrading. The circled classes are the generators as a module over \(\mathbb Z_{2}[\rho,\bar{v}_{n},\tau^{2^{n+1}}]\), and the notation is as follows:
\begin{itemize}
	\item The dotted lines indicate multiplication by \(\rho\). 
	\item The dashed blue lines indicate exotic multiplication by \(\bar{v}_{2}\), and the solid red lines indicate exotic multiplication by \(\bar{v}_{1}\). 
	\item The bullets \(\bullet\) and solid black squares \(\blacksquare\) both indicate copies of \(\mathbb F_{2}\); bullets complexify to zero while black squares complexify to the corresponding generator.
	\item The open circles \(\bigcirc\) are copies of \(\mathbb Z/4\) which complexify to \(\mathbb Z/2\). 
\end{itemize}
To avoid clutter, we do not draw the exotic multiplications by \(\bar{v}_k(b)\) for \(b>0\) and \(k<3\).
\end{remark}

\begin{figure}[ht]
\includegraphics[width=\textwidth]{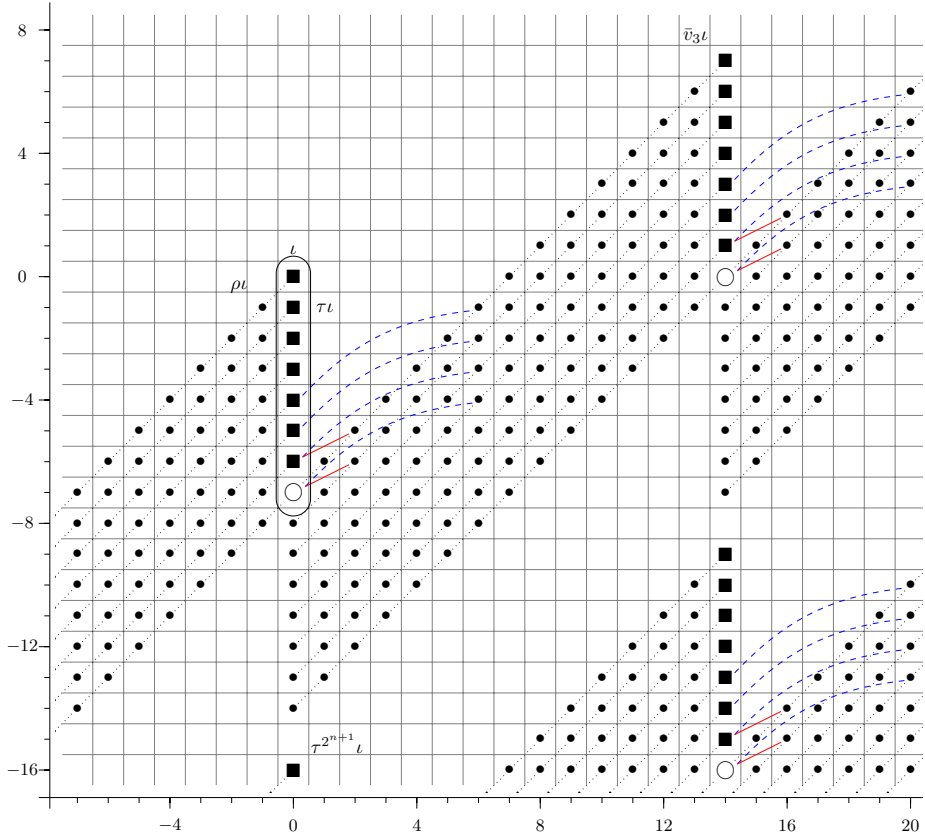}
\caption{The slice \(E_{\infty}\) term for \(k_{GL}(3)\), together with extensions. See Remark~\ref{rem:aboutfig} for the notation.} 
\label{fig:kC23}
\end{figure}

\begin{lemma}\label{lem:Sparseness}
Let \(a\) and \(b\) be non-negative integers and \(0\leq k<n\). Then there is at most one non-zero class that could be \(\bar{v}_k(b)\tau^a\iota\).

If
\[
a+2^k(1+2b)<2^n\text{ or }2^{n+1}-1<a+2^k(1+2b),
\]
then we have
\(
\bar{v}_k(b)\tau^a\iota=0.
\)
Moreover, the ideal \(I_n^2\) acts trivially.
\end{lemma}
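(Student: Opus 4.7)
The plan is a direct bidegree computation in the slice \(E_\infty\) page of \(k_{GL}(n)\), as described by Kylling in the preceding theorems. That \(E_\infty\) is the free module over \(\F_2[\rho,\bar{v}_n,\tau^{2^{n+1}}]/(\rho^{2^{n+1}-1}\bar{v}_n)\) on the generators \(\tau^c\iota\) for \(c\in[0,2^n-1]\); so any nonzero class in bidegree \((2^{k+1}-2,\ 2^k-1-2^{k+1}b-a)\) has a unique representative monomial \(\rho^i\bar{v}_n^j\tau^{c+2^{n+1}m}\iota\) with \(c\in[0,2^n-1]\) and either \(j=0\) or \(i<2^{n+1}-1\). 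Matching the first bidegree coordinate forces \(i=j(2^{n+1}-2)-(2^{k+1}-2)\).

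I would then split on \(j\). The case \(j=0\) requires \(k=0\) and \(i=0\), producing a generator \(\tau^{c+2^{n+1}m}\iota\) in the base slice filtration. The case \(j\geq 2\) gives \(i\geq 2^{n+2}-2^{k+1}-2\geq 2^{n+1}-1\) (using \(k<n\)), so the relation \(\rho^{2^{n+1}-1}\bar{v}_n=0\) kills the monomial. Only \(j=1\) survives, producing a single candidate \(\rho^{2^{n+1}-2^{k+1}}\bar{v}_n\tau^{c+2^{n+1}m}\iota\). Since \(\bar{v}_k\) is killed in the quotient defining \(k_{GL}(n)\), the product \(\bar{v}_k(b)\tau^a\iota\), if nonzero, is detected in strictly higher slice filtration than the \(\tau^c\iota\) generators, hence by a \(\bar{v}_n\)-positive monomial, which must be the \(j=1\) candidate. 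This establishes the "at most one nonzero class" claim.

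For vanishing, matching the second coordinate in the \(j=1\) candidate yields
\[c+2^{n+1}m=a+2^k(1+2b)-2^n=N-2^n,\qquad c\in[0,2^n-1],\ m\geq 0.\]
If \(N<2^n\) the right side is negative, so no solution exists and the product must vanish. If \(N>2^{n+1}-1\), I would first use the identity \(\bar{v}_k(b+2^{n-k})=\bar{v}_k(b)\tau^{2^{n+1}}\) together with Corollary~\ref{cor:ModuleGenerators} to reduce to \(a\in[0,2^n-1]\) and \(b\in[0,2^{n-k}-1]\); \(N\) is invariant under this shift and bounded by \((2^n-1)+2^k+2^{k+1}(2^{n-k}-1)<3\cdot 2^n\), so the equation forces \(m=0\) and \(c=N-2^n\geq 2^n\), contradicting \(c\leq 2^n-1\). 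The main obstacle is justifying this reduction for arbitrary \((a,b)\); it relies on the injectivity of multiplication by \(\tau^{2^{n+1}}\) on the non-\(\rho\)-torsion part of \(\pi_{\ast,\ast}k_{GL}(n)\), which follows from the injection \(k_{GL}(n)\hookrightarrow K_{GL}(n)\) noted earlier in the excerpt.
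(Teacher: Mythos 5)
Your argument is essentially the same as the paper's: both are direct bidegree analyses of the slice \(E_\infty\) page for \(k_{GL}(n)\). The paper phrases this geometrically via a ``quilt'' picture of \((2^{n+1}-1)\times 2^n\) rectangles after a degree sheer, while you parametrize by monomials \(\rho^i\bar{v}_n^j\tau^c\iota\); these are the same computation in different notation, and your split on \(j\), the filtration-jump argument in the \(j=0\) case, and the estimate killing \(j\geq 2\) via the relation \(\rho^{2^{n+1}-1}\bar{v}_n=0\) are all correct.

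However, there is a real error in your final reduction step, and it points at an implicit hypothesis in the lemma that you tried to do without. You write ``\(N\) is invariant under this shift,'' but the shift \(\bar{v}_k(b+2^{n-k})=\bar{v}_k(b)\tau^{2^{n+1}}\) (and likewise a shift of \(a\) by \(2^{n+1}\)) changes \(N=a+2^k(1+2b)\) by multiples of \(2^{n+1}\). This matters: the lemma as literally stated, with \(a,b\) arbitrary nonnegative, is \emph{false}. For instance, take \(n=2\), \(k=1\), \(a=0\), \(b=3\); then \(N=14>2^{n+1}-1=7\), but \(\bar{v}_1(3)\iota=\tau^8\bar{v}_1(1)\iota=\rho^4\bar{v}_2\tau^{10}\iota\neq 0\). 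The lemma is intended (and is used in Theorem~\ref{thm:kGLnDivisibility}) only in the ranges \(a\in[0,2^n-1]\) and \(b\in[0,2^{n-k}-1]\) furnished by Corollary~\ref{cor:ModuleGenerators} and the subsequent corollary; the paper's proof uses this implicitly when it says the target must lie ``in the rectangle from \(\bar{v}_n\iota\).'' If you impose those ranges from the start, your bound \(N<3\cdot 2^n\) already forces \(m=0\) in the equation \(c+2^{n+1}m=N-2^n\), so \(c=N-2^n\notin[0,2^n-1]\) when \(N>2^{n+1}-1\), and the reduction step is unnecessary. So the right fix is not to justify \(\tau^{2^{n+1}}\)-injectivity (which is true but a red herring) but simply to state the range restriction up front; with that in place your argument is complete and matches the paper's.
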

\begin{proof}
The \(E_{\infty}\)-page of the slice spectral sequence looks like a quilt. Applying the degree sheer \((x',y')=(x,y-x)\), the homotopy groups are built out of rectangles of size \((2^{n+1}-1)\) by \((2^{n})\), with a single non-zero class in each degree. The generator of each rectangle is at the top right corner, given by \(\tau^{2^{n+1}m} \bar{v}_n^{\ell}\iota\) for \(m\geq 0\) and \(\ell\geq 0\). 

Since \(\bar{v}_{n}\) has sheered bidegree \(\big((2^{n+1}-2),-(2^{n}-1)\big)\), we see that the rectangle for \(\alpha\) overlaps with the one for \(\bar{v}_n\alpha\) in exactly one corner: the bottom right for \(\alpha\) and the top left for \(\bar{v}_{n}\alpha\). Multiplication by \(\tau^{2^{n+1}}\) adds in a rectangle with generator in degree 
\(
|\alpha|+(0,-2^{n+1}).
\) 
This does not intersect either the original rectangle or any of its \(\bar{v}_{n}\)-multiples. In particular, this proves the first claim, since almost all degrees have a single non-zero class in them, and the degrees with overlap have a single class in higher filtration, where any hidden extensions must land.

For the second and third claims, recall that the degree of \(\bar{v}_k(b)\tau^a\iota\) is
\[
|\bar{v}_k(b)\tau^a\iota|=\big(2^{k+1}-2,2^k-1-2^{k+1}b-a\big).
\]
Since the classes \(\tau^a\iota\) are at the rightmost edge of their rectangle, these extensions must show up in the rectangle from \(\bar{v}_n\iota\). The top edge of the rectangle for \(\bar{v}_n\iota\) is given by the classes \(\rho^m\bar{v}_n\iota\). The only class on that edge with first coordinate \((2^{k+1}-2)\) is \(
\rho^{2^{n+1}-2^{k+1}}\bar{v}_n\iota,
\)
which is in bidegree
\[
|\rho^{2^{n+1}-2^{k+1}}\bar{v}_n\iota|=\big(2^{k+1}-2,1-2^n+2^{k+1}-2\big).
\]
Similarly, the bottom edge of the rectangle on \(\bar v_n \iota\) is given by the classes \(\rho^m\bar{v}_n\tau^{2^n-1}\iota\).  Rearranging the second coordinates gives the desired vanishing region. 

Finally, note that \((2^{n+1}-2^{k+1})\geq (2^{j+1}-1)\) for any \(j\leq k<n\), so the target of \(\bar{v}_k(b)\)-multiplication annihilates \(\bar{v}_j(c)\). This is the third claim.
\end{proof}

\begin{theorem}\label{thm:kGLnDivisibility}
In the homotopy of \(k_{GL}(n)\), we have the following hidden \(\pi_{(\ast,\ast)}\BPGL\)-multiplications for \(0\leq k\leq (n-1)\):
\begin{itemize}
    \item for \(0\leq b\leq (2^{n-k-1}-1)\) and \(0\leq a\leq \big(2^{k}(1+2b)-1\big)\),
    \[
    \bar{v}_k(b) \tau^{2^{n}-2^{k}(1+2b)+a}\iota=\rho^{2^{n+1}-2^{k+1}}\bar{v}_n\tau^{a}\iota,
    \]
    \item and for \(2^{n-k-1}\leq b\leq (2^{n-k}-1)\), and \(0\leq a\leq \big(2^{n+1}-1-2^k(1+2b)\big)\),
    \[
    \bar{v}_k(b)\tau^a\iota=\rho^{2^{n+1}-2^{k+1}}\bar{v}_n\tau^{a+2^k(1+2b)-2^n}\iota.
    \]
\end{itemize}
\end{theorem}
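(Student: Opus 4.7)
The plan is to deduce these hidden extensions from the transchromatic Toda bracket of Theorem~\ref{thm:WeirdBracket}, pushing it into $\pi_{\star}k_{GL}(n)$ and identifying a second bracket representative via sparseness. For the first case, fix $0\leq b \leq 2^{n-k-1}-1$ and $0\leq a \leq 2^k(1+2b)-1$, and set $c = 2^{n-k-1}-1-b$ so that $1+b+c = 2^{n-k-1}$. Theorem~\ref{thm:WeirdBracket}, applied at chromatic height $k$ with $r=t=0$, $s=2^{k+1}-1$, $d=0$, and $j=n-k-1$, gives
\[
\rho^{2^{n+1}-2^{k+1}}\bar{v}_n \in \langle \bar{v}_k(b), \rho^{2^{k+1}-1}, \bar{v}_k(c)\rangle
\]
in $\pi_{\star}\BPGL$, with indeterminacy controlled by Corollary~\ref{cor:UniqueBracket}.

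I then multiply by the permanent cycle $\tau^a\iota \in \pi_{\star}k_{GL}(n)$. The standard juggle
\[
\langle \bar{v}_k(b), \rho^{2^{k+1}-1}, \bar{v}_k(c)\rangle \cdot \tau^a\iota \subseteq \langle \bar{v}_k(b), \rho^{2^{k+1}-1}, \bar{v}_k(c)\tau^a\iota\rangle
\]
places $\rho^{2^{n+1}-2^{k+1}}\bar{v}_n\tau^a\iota$ inside the bracket computed in $\pi_{\star}k_{GL}(n)$. The crucial step is producing $\bar{v}_k(b)\tau^{2^n-2^k(1+2b)+a}\iota$ as a second representative. Lemma~\ref{lem:Sparseness} gives $\bar{v}_k(c)\tau^a\iota = 0$ in $\pi_{\star}k_{GL}(n)$, since $a + 2^k(1+2c) = a + 2^n - 2^k(1+2b) < 2^n$; moreover this vanishing has a canonical witness coming from the cofibering of $\bar{v}_k$ in the quotient: the slice $E_\infty$ representative of $\bar{v}_k(c)\tau^a\iota$ is $\bar{v}_k\cdot \tau^{2^{k+1}c+a}\iota$, which vanishes precisely because $\bar{v}_k$ lies in the defining ideal of $k_{GL}(n)$. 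Feeding this canonical null-homotopy into the bracket construction exhibits $\bar{v}_k(b)\tau^{2^n-2^k(1+2b)+a}\iota$ as a second representative. Corollary~\ref{cor:UniqueBracket} kills the $\pi_{\star}\BPGL$-level indeterminacy, and Lemma~\ref{lem:Sparseness} rules out any further contribution in $\pi_{\star}k_{GL}(n)$ in the relevant bidegree, so the two representatives agree and the first equation follows.

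The second case, $2^{n-k-1}\leq b\leq 2^{n-k}-1$, follows an analogous argument: either one shifts $b \mapsto b-2^{n-k-1}$ and uses the $\tau^{2^{n+1}}$-periodicity of $\pi_{\star}k_{GL}(n)$ to reduce to the first case (exploiting that $\bar{v}_k(b+2^{n-k})\iota = \bar{v}_k(b)\cdot \tau^{2^{n+1}}\iota$ in $\pi_{\star}k_{GL}(n)$, as $\tau^{2^{n+1}}$ becomes a permanent cycle once $\bar{v}_{n+1}$ is killed), or one directly sets $c = 2^{n-k}-1-b$ and appeals to the dual sparseness condition $a + 2^k(1+2c) > 2^{n+1}-1$ from Lemma~\ref{lem:Sparseness}. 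The main obstacle is the precise identification in the first case of the canonical null-homotopy of $\bar{v}_k(c)\tau^a\iota$ as the bracket witness producing $\bar{v}_k(b)\tau^{2^n-2^k(1+2b)+a}\iota$; this mirrors the $\rho$-fraction bookkeeping from the proof of Theorem~\ref{thm:HopfAlgBracket}, now unpacked through the cofiber sequence defining $k_{GL}(n)$.
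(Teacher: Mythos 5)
Your argument for the first family of extensions is correct in substance, though it takes a slightly different route from the paper and the discussion of ``canonical null-homotopies'' is a red herring. The paper shuffles $\langle \bar{v}_k(b),\rho^{2^{k+1}-1},\bar{v}_k(c)\rangle\cdot\tau^a\iota$ into $\bar{v}_k(b)\cdot\langle\rho^{2^{k+1}-1},\bar{v}_k(c),\tau^a\iota\rangle$ and pins down the (non-degenerate) inner bracket by a degree check. You instead absorb $\tau^a\iota$ into the third slot, producing the bracket $\langle\bar{v}_k(b),\rho^{2^{k+1}-1},\bar{v}_k(c)\tau^a\iota\rangle$ with $\bar{v}_k(c)\tau^a\iota=0$. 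Once the third entry is $0$, that bracket is simply the full indeterminacy $\bar{v}_k(b)\cdot\pi_{\ast,\ast}k_{GL}(n)$ in the appropriate bidegree; no choice of null-homotopy singles out a preferred representative, and your appeal to a ``canonical witness'' via the cofiber sequence does not actually do anything. What saves the argument is sparseness (Lemma~\ref{lem:Sparseness}): the ambient group is cyclic on $\tau^{2^n-2^k(1+2b)+a}\iota$, the left-hand side $\rho^{2^{n+1}-2^{k+1}}\bar{v}_n\tau^a\iota$ is nonzero and $2$-torsion (being $\rho$-divisible), so the only way it can lie in $\bar{v}_k(b)\cdot\pi_?$ is as $\bar{v}_k(b)\cdot\tau^{2^n-2^k(1+2b)+a}\iota$. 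State it that way and the first case is complete.

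The second case has a genuine gap, and neither of your two proposed alternatives works. The shift $b\mapsto b-2^{n-k-1}$ cannot be implemented by $\tau^{2^{n+1}}$-periodicity: multiplication by $\tau^{2^{n+1}}$ shifts $b$ by $2^{n-k}$, not $2^{n-k-1}$, and the intermediate power $\tau^{2^n}$ is not a permanent cycle in the slice spectral sequence for $k_{GL}(n)$ (it supports $d_{2^{n+1}-1}$). Your alternative with $c=2^{n-k}-1-b$ gives $1+b+c=2^{n-k}$, so Theorem~\ref{thm:WeirdBracket} produces a power of $\rho$ times $\bar{v}_{n+1}$, which vanishes in $k_{GL}(n)$; the resulting bracket inclusion is vacuous. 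The paper instead chooses $c=2^{n-k}+2^{n-k-1}-b-1$, so that $1+b+c=3\cdot 2^{n-k-1}$ and the bracket lands on $\bar{v}_n(1)=\bar{v}_n\tau^{2^{n+1}}$, and then argues by contradiction: if $\bar{v}_k(b)\tau^a\iota$ were zero, one could shuffle and multiply by $\rho$, forcing $\rho^{2^{n+1}-2^{k+1}+1}\bar{v}_n(1)\tau^a\iota=0$, which is false. You need this indirect step, or an equivalent, for the second case.
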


\begin{proof}
These follow surprisingly quickly from Theorem~\ref{thm:WeirdBracket}. We note that since multiplication by \(\rho\) is injective on both the source and target of any extension involving \(\bar{v}_k(b)\) for \(k>0\), there is no ambiguity caused by the indeterminacy. There was no indeterminacy anyway for brackets involving \(\bar{v}_0(b)\). Since the ideal \(I_n^2\) acts trivially, we also have no ambiguity due to higher Adams filtration elements.

For \(0\leq b\leq (2^{n-k-1}-1)\), Theorem~\ref{thm:WeirdBracket} shows that we have a bracket
\[
\rho^{2^{n+1}-2^{k+1}}\bar{v}_n=\big\langle \bar{v}_k(b),\rho^{2^{k+1}-1},\bar{v}_k(2^{n-k-1}-1-b)\big\rangle,
\]
(where again, we ignore indeterminacy since it does not contribute). This gives us
\[
\rho^{2^{n+1}-2^{k+1}}\bar{v}_n\tau^a\iota=\big\langle \bar{v}_k(b),\rho^{2^{k+1}-1},\bar{v}_k(2^{n-k-1}-1-b)\big\rangle\tau^a\iota.
\]
Lemma~\ref{lem:Sparseness} shows that for \(a<2^k(1+2b)\), we have
\[
\bar{v}_k(2^{n-k-1}-1-b)\tau^a\iota=0,
\]
so we can shuffle the bracket, giving:
\[
\rho^{2^{n+1}-2^{k+1}}\bar{v}_n\tau^a\iota=\bar{v}_n(b)\big\langle\rho^{2^{k+1}-1},\bar{v}_k(2^{n-k-1}-1-b),\tau^a\iota\big\rangle.
\]
A degree check shows that the only possible value for the bracket is
\[
\big\langle\rho^{2^{k+1}-1},\bar{v}_k(2^{n-k-1}-1-b),\tau^a\iota\big\rangle=\tau^{2^n-2^{k}(1+2b)+a}\iota.
\]

For \(2^{n-k-1}\leq b\leq 2^{n-k}-1\), we instead will use brackets with \(\bar{v}_n(1)=\bar{v}_n\tau^{2^{n+1}}\), arguing somewhat indirectly. If we let 
\[
c=2^{n-k}+2^{n-k-1}-b-1,
\]
then we have
\[
\rho^{2^{n+1}-2^{k+1}}\bar{v}_n(1)=\big\langle\bar{v}_k(c),\rho^{2^{k+1}-1},\bar{v}_k(b)\big\rangle.
\]
If \(\bar{v}_k(b)\tau^a\iota=0\), then we can multiply \(\tau^a\iota\) by both sides and shuffle the bracket, getting
\[
\rho^{2^{n+1}-2^{k+1}}\bar{v}_n(1)\tau^a\iota=\bar{v}_k(c)\big\langle \rho^{2^{k+1}-1},\bar{v}_k(b),\tau^a\iota\big\rangle.
\]
The degree of the bracket is \(\big(0,-a-2^k(1+2b)\big)\). The bounds 
\[
0\leq a\leq 2^{n+1}-1-2^k(1+2b)
\]
and \(b\geq 2^{n-k-1}\) show that we have bounds
\[
-2^k-2^n\geq -a-2^k(1+2b)\geq 1-2^{n+1}.
\]
We are therefore at the leftmost edge of the ``quilt rectangle'' generated by \(\bar{v}_n\iota\). All of the elements with first coordinate zero here are annihilated by \(\rho\), so we reach a contradiction:
\[
0\neq\rho^{2^{n+1}-2^{k+1}+1}\bar{v}_n(1)\tau^a\iota=\bar{v}_k(c)\rho\big\langle \rho^{2^{k+1}-1},\bar{v}_k(b),\tau^a\iota\big\rangle=0.
\]
As always, there is a unique possibility for the value of \(\bar{v}_k(b)\tau^a\iota\), and checking degrees, we see it must be the listed one. 
\end{proof}

Specializing to the chromatic classes \(\bar{v}_k\), we have a series of non-trivial extensions.

\begin{corollary}
In the homotopy of \(k_{GL}(n)\), we have the following hidden \(\bar{v}_k\)-multiplications for \(0\leq k\leq (n-1)\) and \(0\leq a\leq (2^k-1)\):
\[
\bar{v}_k \tau^{2^{n}-2^{k}+a}\iota=\rho^{2^{n+1}-2^{k+1}}\bar{v}_n\tau^{a}\iota.
\]
\end{corollary}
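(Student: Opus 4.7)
The plan is simple: this is exactly the \(b=0\) specialization of the first case of Theorem~\ref{thm:kGLnDivisibility}. Setting \(b=0\) replaces \(\bar{v}_k(0)\) by \(\bar{v}_k\), turns the bound \(0\le a\le 2^{k}(1+2b)-1\) into \(0\le a\le 2^{k}-1\), and leaves the right-hand side \(\rho^{2^{n+1}-2^{k+1}}\bar{v}_n\tau^{a}\iota\) unchanged. Hence the corollary follows by direct quotation; the paragraphs below sketch the underlying Massey product computation so that the transchromatic content of the corollary is visible.

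First I would apply Theorem~\ref{thm:WeirdBracket} at chromatic level \(k\) with parameters \(b=0\), \(c=2^{n-k-1}-1\), \(r=t=0\), and \(s=2^{k+1}-1\). Then \(1+b+c=2^{n-k-1}\) gives \(j=n-k-1\) and \(d=0\), and the \(\rho\)-exponent computes to \(2^{n+1}-2^{k+1}\), producing
\[
\rho^{2^{n+1}-2^{k+1}}\bar{v}_n\in\bigl\langle \bar{v}_k,\rho^{2^{k+1}-1},\bar{v}_k(2^{n-k-1}-1)\bigr\rangle.
\]
The indeterminacy allowed by Theorem~\ref{thm:Indeterminacy} is either trivial or \(\rho\)-torsion and will not interfere with the step to follow.

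Next, I would multiply both sides by \(\tau^{a}\iota\) for \(0\le a\le 2^{k}-1\). Lemma~\ref{lem:Sparseness} gives \(\bar{v}_k(2^{n-k-1}-1)\tau^{a}\iota=0\), since \(a+2^{k}(2^{n-k}-1)=a+2^{n}-2^{k}<2^{n}\) in this range, so shuffling the bracket is legal and yields
\[
\rho^{2^{n+1}-2^{k+1}}\bar{v}_n\tau^{a}\iota=\bar{v}_k\bigl\langle \rho^{2^{k+1}-1},\bar{v}_k(2^{n-k-1}-1),\tau^{a}\iota\bigr\rangle.
\]
The one-class-per-bidegree quilt structure from Lemma~\ref{lem:Sparseness} then identifies the inner bracket uniquely as \(\tau^{2^{n}-2^{k}+a}\iota\), giving the asserted hidden extension. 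The main point of care is the saturated boundary case \(a=2^{k}-1\), where the sparseness inequality is tight and one must confirm that no competing class in positive slice filtration interferes with the degree identification; this is exactly the book-keeping already carried out inside the proof of Theorem~\ref{thm:kGLnDivisibility}, so I do not expect any new obstacle.
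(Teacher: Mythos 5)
Your proof is correct and matches the paper's approach exactly: the corollary is the $b=0$ case of the first item of Theorem~\ref{thm:kGLnDivisibility}, which the paper records immediately after that theorem without further argument. Your supplementary sketch of the underlying Massey-product computation also agrees with the paper's proof of Theorem~\ref{thm:kGLnDivisibility} (and in fact silently corrects a small typo there, where $\bar{v}_n(b)$ appears outside the shuffled bracket in place of $\bar{v}_k(b)$).
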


Since \(\bar{v}_0\) detects multiplication by \(2\) in \(\MGL\)-modules, we have non-trivial additive extensions.  This resolves the question described in \cite[Remark 9.8]{Kylling}.

\begin{remark}
These extensions show just how far the quotient is from being a ring. One way to parse Theorem~\ref{thm:kGLnDivisibility} is that although we cone-off \(\bar{v}_k\) for \(0\leq k\leq n-1\), and hence we seem to kill all of the classes \(\bar{v}_k(b)\), we actually see non-trivial multiplications by all of the generators of \(\pi_{\ast,\ast}\BPGL/(\bar{v}_{n+1},\dots)\).
\end{remark}

Even though the indeterminacy for brackets may grow in the passage from motivic homotopy over \(\R\) to \(C_2\)-equivariant homotopy, the extensions we found are visible just in the homotopy. They will in particular go through without change. We indicate the result using the usual equivariant names.

\begin{corollary}
In the homotopy groups of \(K_{\R}(n)\), we have exotic \(\pi_\star\MUR\) multiplications for \(0\leq k\leq (n-1)\):
\begin{itemize}
    \item for \(0\leq b\leq (2^{n-k-1}-1)\), and \(0\leq a\leq \big(2^{k}(1+2b)-1\big)\),
    \[
    \bar{v}_k(b) u_{\sigma}^{2^{n}-2^{k}(1+2b)+a}\iota=a_\sigma^{2^{n+1}-2^{k+1}}\bar{v}_n u_\sigma^{a}\iota,
    \]
    \item and for \(2^{n-k-1}\leq b\leq (2^{n-k}-1)\), and \(0\leq a\leq \big(2^{n+1}-1-2^k(1+2b)\big)\),
    \[
    \bar{v}_k(b)u_\sigma^a\iota=a_\sigma^{2^{n+1}-2^{k+1}}\bar{v}_nu_\sigma^{a+2^k(1+2b)-2^n}\iota.
    \]
\end{itemize}
\end{corollary}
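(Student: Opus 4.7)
The plan is to deduce this corollary directly from Theorem~\ref{thm:kGLnDivisibility} via the realization functor \(\Phi\colon SH(\R)\to SH^{C_2}\) from motivic spectra over \(\R\) to \(C_2\)-equivariant spectra, which was already invoked in the introduction. First I would record the translation dictionary: \(\Phi\) is symmetric monoidal with \(\Phi(\MGL)=\MUR\), \(\Phi(\rho)=a_\sigma\), \(\Phi(\tau)=u_\sigma\) (interpreted in the quotient theories where this square root of \(u_{2\sigma}\) makes sense, even when it does not lift to \(\MUR\) itself), and \(\Phi(\bar v_k(b))=\bar v_k u_{2\sigma}^{2^k b}\). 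Because \(\Phi\) commutes with the iterated cofiber sequences used to build \(k_{GL}(n)\) from \(\BPGL\), and commutes with \(\bar v_n\)-inversion, it sends \(k_{GL}(n)\) to \(k_\R(n)\) and \(K_{GL}(n)\) to \(K_\R(n)\).

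Second, the extensions in Theorem~\ref{thm:kGLnDivisibility} are \emph{equalities} of elements in \(\pi_{(\ast,\ast)}k_{GL}(n)\), not merely containments in Toda brackets. Applying \(\Phi\) to each such equality yields the corresponding equality in \(\pi_\star k_\R(n)\) after translating every symbol through the dictionary above, and \(\bar v_n\)-inversion preserves these equalities. This is precisely why the indeterminacy-growth issue flagged in the surrounding text is not an obstacle here: any Toda bracket indeterminacy was already eliminated in the motivic setting by Theorem~\ref{thm:Indeterminacy} (via Corollary~\ref{cor:UniqueBracket}), so we feed only honest equalities into the realization functor.

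The step requiring the most care is nontriviality: one must verify that each equivariant right-hand side \(a_\sigma^{2^{n+1}-2^{k+1}}\bar v_n u_\sigma^a\iota\) is nonzero in \(K_\R(n)\), so that the relations are not vacuous. For this I would invoke that the slice spectral sequence for \(k_\R(n)\) is the realization of Kylling's motivic slice spectral sequence for \(k_{GL}(n)\), per the work of Hu--Kriz and Heard already cited, so the ``quilt'' sparseness argument used in the proof of Lemma~\ref{lem:Sparseness} carries over verbatim. The claimed classes are detected by nonzero generators at slice filtration zero, hence they remain nonzero after passage to \(k_\R(n)\) and the subsequent \(\bar v_n\)-localization to \(K_\R(n)\). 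The main obstacle, such as there is one, is the careful bookkeeping required to match the bidegree ranges on \(a\) and \(b\) through the motivic-bidegree to \(RO(C_2)\)-grading translation, ensuring that the two cases \(b\leq 2^{n-k-1}-1\) and \(b\geq 2^{n-k-1}\) pass across verbatim with the same exponents of \(a_\sigma\) and \(u_\sigma\).
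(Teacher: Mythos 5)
Your proposal is correct and follows essentially the same line of reasoning the paper indicates for this corollary: apply the Betti realization functor $SH(\R)\to SH^{C_2}$ to the honest equalities established in Theorem~\ref{thm:kGLnDivisibility}, noting that because these are equalities of homotopy classes (not mere Toda bracket containments, whose indeterminacy could grow under realization), they pass through without loss. Your extra care on the translation dictionary and nontriviality-via-slice-sparseness is consistent with the paper's setup and fills in detail the paper leaves implicit, but the core argument is the same.
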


\bibliographystyle{plain}


\begin{thebibliography}{10}

\bibitem{Araki}
Sh\^{o}r\^{o} Araki.
\newblock Orientations in {$\tau $}-cohomology theories.
\newblock {\em Japan. J. Math. (N.S.)}, 5(2):403--430, 1979.

\bibitem{AtiyahKR}
M.~F. Atiyah.
\newblock {$K$}-theory and reality.
\newblock {\em Quart. J. Math. Oxford Ser. (2)}, 17:367--386, 1966.

\bibitem{BHSZOne}
Agn\'es Beaudry, Michael~A. Hill, Xiaolin~Danny Shi, and Mingcong Zeng.
\newblock Models of {L}ubin--{T}ate spectra via {R}eal bordism theory.
\newblock arxiv.org: 2001.08295, 2020.

\bibitem{BorghesiMoravaK}
Simone Borghesi.
\newblock Algebraic {M}orava {$K$}-theories.
\newblock {\em Invent. Math.}, 151(2):381--413, 2003.

\bibitem{DuggerKR}
Daniel Dugger.
\newblock An {A}tiyah-{H}irzebruch spectral sequence for {$KR$}-theory.
\newblock {\em $K$-Theory}, 35(3-4):213--256 (2006), 2005.

\bibitem{FriedSus}
Eric~M. Friedlander and Andrei Suslin.
\newblock The spectral sequence relating algebraic {$K$}-theory to motivic
  cohomology.
\newblock {\em Ann. Sci. \'{E}cole Norm. Sup. (4)}, 35(6):773--875, 2002.

\bibitem{Fujii}
Michikazu Fujii.
\newblock Cobordism theory with reality.
\newblock {\em Math. J. Okayama Univ.}, 18(2):171--188, 1975.

\bibitem{HahnShi}
Jeremy Hahn and XiaoLin~Danny Shi.
\newblock Real orientations of {L}ubin-{T}ate spectra.
\newblock {\em Invent. Math.}, 221(3):731--776, 2020.

\bibitem{HeardSlice}
Drew Heard.
\newblock On equivariant and motivic slices.
\newblock {\em Algebr. Geom. Topol.}, 19(7):3641--3681, 2019.

\bibitem{HHR}
M.~A. Hill, M.~J. Hopkins, and D.~C. Ravenel.
\newblock On the nonexistence of elements of {K}ervaire invariant one.
\newblock {\em Ann. of Math. (2)}, 184(1):1--262, 2016.

\bibitem{HillRhoBockstein}
Michael~A. Hill.
\newblock Ext and the motivic {S}teenrod algebra over {$\mathbb R$}.
\newblock {\em J. Pure Appl. Algebra}, 215(5):715--727, 2011.

\bibitem{HopHoyMorel}
Marc Hoyois.
\newblock From algebraic cobordism to motivic cohomology.
\newblock {\em J. Reine Angew. Math.}, 702:173--226, 2015.

\bibitem{HuKriz}
Po~Hu and Igor Kriz.
\newblock Real-oriented homotopy theory and an analogue of the
  {A}dams-{N}ovikov spectral sequence.
\newblock {\em Topology}, 40(2):317--399, 2001.

\bibitem{HuKrizMot}
Po~Hu and Igor Kriz.
\newblock Some remarks on {R}eal and algebraic cobordism.
\newblock {\em $K$-Theory}, 22(4):335--366, 2001.

\bibitem{KrainesHigher}
David Kraines.
\newblock Massey higher products.
\newblock {\em Trans. Amer. Math. Soc.}, 124:431--449, 1966.

\bibitem{Kylling}
Jonas~Irgens Kylling.
\newblock Algebraic cobordism of number fields.
\newblock arxiv.org: 1901.04031, 2019.

\bibitem{Landweber}
Peter~S. Landweber.
\newblock Conjugations on complex manifolds and equivariant homotopy of {$MU$}.
\newblock {\em Bull. Amer. Math. Soc.}, 74:271--274, 1968.

\bibitem{LevineANSS}
Marc Levine.
\newblock The {A}dams-{N}ovikov spectral sequence and {V}oevodsky's slice
  tower.
\newblock {\em Geom. Topol.}, 19(5):2691--2740, 2015.

\bibitem{LevineTripathiQuotients}
Marc Levine and Girja~Shanker Tripathi.
\newblock Quotients of {MGL}, their slices and their geometric parts.
\newblock {\em Doc. Math.}, (Extra vol.: Alexander S. Merkurjev's sixtieth
  birthday):407--442, 2015.

\bibitem{MayMatric}
J.~Peter May.
\newblock Matric {M}assey products.
\newblock {\em J. Algebra}, 12:533--568, 1969.

\bibitem{MRW}
Haynes~R. Miller and Douglas~C. Ravenel.
\newblock Morava stabilizer algebras and the localization of {N}ovikov's
  {$E_{2}$}-term.
\newblock {\em Duke Math. J.}, 44(2):433--447, 1977.

\bibitem{MorelZero}
Fabien Morel.
\newblock On the motivic {$\pi_0$} of the sphere spectrum.
\newblock In {\em Axiomatic, enriched and motivic homotopy theory}, volume 131
  of {\em NATO Sci. Ser. II Math. Phys. Chem.}, pages 219--260. Kluwer Acad.
  Publ., Dordrecht, 2004.

\bibitem{MossConvergence}
R.~Michael~F. Moss.
\newblock Secondary compositions and the {A}dams spectral sequence.
\newblock {\em Math. Z.}, 115:283--310, 1970.

\bibitem{MotivicLandweber}
Niko Naumann, Markus Spitzweck, and Paul~Arne \O~stv\ae r.
\newblock Motivic {L}andweber exactness.
\newblock {\em Doc. Math.}, 14:551--593, 2009.

\bibitem{OrmsbyExt}
Kyle~M. Ormsby.
\newblock Motivic invariants of {$p$}-adic fields.
\newblock {\em J. K-Theory}, 7(3):597--618, 2011.

\bibitem{SagaveUniversal}
Steffen Sagave.
\newblock Universal {T}oda brackets of ring spectra.
\newblock {\em Trans. Amer. Math. Soc.}, 360(5):2767--2808, 2008.

\bibitem{VoeMotSS}
Vladimir Voevodsky.
\newblock A possible new approach to the motivic spectral sequence for
  algebraic {$K$}-theory.
\newblock In {\em Recent progress in homotopy theory ({B}altimore, {MD},
  2000)}, volume 293 of {\em Contemp. Math.}, pages 371--379. Amer. Math. Soc.,
  Providence, RI, 2002.

\bibitem{VoevodskyMod2}
Vladimir Voevodsky.
\newblock Motivic cohomology with {${\bf Z}/2$}-coefficients.
\newblock {\em Publ. Math. Inst. Hautes \'{E}tudes Sci.}, (98):59--104, 2003.

\bibitem{VoevodskySteenrod}
Vladimir Voevodsky.
\newblock Reduced power operations in motivic cohomology.
\newblock {\em Publ. Math. Inst. Hautes \'{E}tudes Sci.}, (98):1--57, 2003.

\bibitem{YagitaAHSS}
Nobuaki Yagita.
\newblock Applications of {A}tiyah-{H}irzebruch spectral sequences for motivic
  cobordism.
\newblock {\em Proc. London Math. Soc. (3)}, 90(3):783--816, 2005.

\end{thebibliography}

\end{document}